\newcounter{statements}
\numberwithin{statements}{section}
\newtheorem{thm}[statements]{Theorem}
\newtheorem{prop}[statements]{Proposition}
\newtheorem{lem}[statements]{Lemma}
\newtheorem{cor}[statements]{Corollary}
\newtheorem{defn}[statements]{Definition}
\newtheorem{obs}[statements]{Observation}
\newtheorem{prob}[statements]{Problem}
\newtheorem{rem}[statements]{Remark}
\newtheorem{example}[statements]{Example}
\DeclareMathOperator{\NA}{NA}
\DeclareMathOperator{\BI}{BI}
\DeclareMathOperator{\Diff}{Diff}
\DeclareMathOperator{\graph}{graph}
\newcommand{\Wls}{W_{loc}^s}
\newcommand{\Wlu}{W_{loc}^u}
\newcommand{\Wsu}{W_{gl}^u}
\newcommand{\DS}{\mathcal{DS}}
\newcommand{\M}{\mathcal{M}}
\title{Stable ergodicity of dominated systems}
\author{Martin Andersson \thanks{This work was carried out at Universidade de S\~{a}o Paulo, S\~{a}o Carlos, and \'Ecole Normale Sup\'erieure, Paris, with financial support from FAPESP (Brazil) and Fondation Sciences Math\`ematiques \`a Paris (France). }\\
 }
\begin{document}
\maketitle

\abstract{ We provide a new approach to stable ergodicity of systems with dominated splittings, based on a geometrical analysis of global stable and unstable manifolds of hyperbolic points. Our method suggests that the lack of uniform size of Pesin's local stable and unstable manifolds --- a notorious problem in the theory of non-uniform hyperbolicity --- is often less severe than it appeas to be.}

\section{Introduction}

The theory of ergodicity of diffeomorphisms deals with the
question of whether a given  conservative (volume preserving)
diffeomorphism is ergodic. It dates back to the work of Anosov
\cite{MR0163258}, who adopted Hopf's argument \cite{MR0001464} on the ergodicity
of certain geodesic flows, to prove that every conservative $C^2$
Anosov diffeomorphism on a compact connected Riemannian manifold is ergodic.
Since the set of Anosov diffeomorphisms is open in the $C^1$ topology, such
diffeomorphisms are stably ergodic in the following sense: every
conservative $C^2$ diffeomorphism sufficiently close, in the $C^1$
topology, to a conservative $C^2$ Anosov diffeomorphism is
ergodic. This is the definition of stable ergodicity used throughout this paper.

In \cite{MR1449765} Pugh and Shub initiated a programme for studying
ergodicity of partially hyperbolic diffeomorphisms. They conjectured that stable ergodicity, as defined above, is $C^2$-dense in this context. The conjecture has been very successful and recently proved  in \cite{MR2390288} to be true in the case where the central bundle is one dimensional. 

Although the theory of stable ergodicity so far has dealt
predominantly with systems admitting a partially hyperbolic
splitting, such is by no means necessary. In his thesis \cite{MR2085722}, Tahzibi provided examples of stably ergodic diffeomorphisms with no
uniformly expanding/contracting subbundle. On the other hand, his
examples enjoy the following properties:
\begin{itemize}
\item They admit an invariant  dominated splitting $TM = E^{cs} \oplus E^{cu}$ (see definition in section \ref{statements}).
\item They are non-uniformly hyperbolic,  meaning that all Lyapunov exponents
are non-vanishing in a full Lebesgue measure set.
\item The Oseledet splitting is compatible with the dominated
splitting in the sense that, for Lebesgue almost every $x \in M$,
the Lyapunov exponents of vectors in $E_x^{cs}$ are all negative, and those of vectors in $E_x^{cu}$ are all positive.
\end{itemize}
Conservative diffeomorphisms with these properties will henceforth
be referred to as \emph{non-uniformly Anosov diffeomorphisms}. They are the subject of
this article. Indeed, following the apparent success of the
Pugh-Shub conjecture on the denseness of stable ergodicity in
partially hyperbolic dynamics, it seems natural to conjecture
denseness of stable ergodicity among conservative diffeomorphisms
admitting a dominated splitting. This paper is written along with the belief that, within the space $\mathcal{DS}$, of $C^2$ diffeomorphisms admitting a dominated splitting, there is an open and dense set of ergodic, non-uniformly Anosov diffeomorphisms. Here it is relevant to mention the result of Bochi,
Fayad, Pujals \cite{MR2227756}, in which it is proved that every
stably ergodic diffeomorphism is $C^1$ approximated by one which
is non-uniformly Anosov.

Thus motivated by \cite{MR2085722} and \cite{MR2227756}, we turn the question around and ask: under what condition are non-uniformly Anosov diffeomorphisms (stably) ergodic? We can try, naively, to apply the Hopf-Anosov argument. On doing so, one has to take into account two
differences between the Anosov and non-uniform Anosov case:
\begin{enumerate}
\item Non-uniformly Anosov diffeomorphisms do not have local stable
and unstable manifolds associated to every point, but only to
Lebesgue almost every point.
\item In the non-uniformly Anosov case, local stable and unstable
manifolds are not of uniform size.
\end{enumerate}

Only the lack of uniform size of local stable and unstable
manifolds provides a serious problem for the adaptation of the
Hopf-Anosov argument, and actually makes it fail in some cases (not
all non-uniformly Anosov diffeomorphisms are ergodic, see \cite{DHP}). Still, the problem is not necessarily as serious as it may first seem, by the following 
\begin{obs}
Rather than the local stable and unstable manifolds, what matters in the Hopf-Anosov argument is the size (whatever that means) of the stable and unstable sets
\begin{align}
 W^s(x) &= \{y \in M: d(f^n(x), f^n(y)) \rightarrow 0 \text{ as } n \rightarrow \infty \} \\
W^u(x) &= \{y \in M: d(f^{-n}(x), f^{-n}(y)) \rightarrow 0 \text{ as } n \rightarrow \infty \}.
\end{align}
\end{obs}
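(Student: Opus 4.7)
The plan is to revisit the classical Hopf argument and isolate which features of the stable and unstable objects are actually being used; the claim is that these features belong to the sets $W^s(x)$ and $W^u(x)$ themselves and do not require a local manifold structure.

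First, I would start with a continuous observable $\varphi$ and invoke Birkhoff's ergodic theorem to produce forward and backward time averages $\varphi^+(x)$ and $\varphi^-(x)$ defined on a full volume set and coinciding Lebesgue almost everywhere. The crucial point, which follows directly from the definitions together with the uniform continuity of $\varphi$, is that $\varphi^+$ is constant on each stable set $W^s(x)$: if $d(f^n(x),f^n(y)) \to 0$ then the Ces\`aro averages of $\varphi$ along the orbits of $x$ and $y$ share a common limit. Symmetrically $\varphi^-$ is constant on each unstable set $W^u(x)$. No local manifold appears in this step.

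Second, the classical Hopf argument concludes by combining these two invariances with a Fubini-type statement to show that $\varphi^+$ is a.e.\ constant. This is where local stable and unstable manifolds are traditionally used, since they provide a local product structure along which pairs of Lebesgue density points can be connected by finite zig-zag chains of stable and unstable pieces. However, any alternative geometric data attached to the global sets $W^s(x)$ and $W^u(x)$ that permits the same zig-zag connectivity --- for instance, immersed submanifolds of complementary dimensions whose families are absolutely continuous with respect to volume --- would make the argument carry through unchanged.

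The hard part, which this observation deliberately defers to the rest of the paper, will be to quantify the relevant notion of ``size'' of the global sets $W^s(x)$ and $W^u(x)$ and to verify it under the non-uniformly Anosov hypothesis, where the local Pesin manifolds themselves are of non-uniform size. The main obstacle I foresee is measure-theoretic: establishing an absolute continuity-type statement for the families of global stable and unstable sets, thereby bypassing the dependence on local manifolds that is built into the traditional Pesin machinery.
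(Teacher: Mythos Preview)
Your proposal is correct and matches the paper's own treatment: the observation is not given a formal proof in the paper, and the sole justification offered there is precisely your first point, namely that the forward Birkhoff average $\varphi_+$ of a continuous observable is constant along $W^s(x)$ (and dually for $\varphi_-$ and $W^u(x)$). Your additional remarks on the Fubini/absolute-continuity step and on deferring the quantification of ``size'' to later sections are accurate and anticipate exactly how the paper proceeds.
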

More precisely, by local stable manifolds we mean
the sets
\begin{multline}
\Wls (f, x):= \{ y\in M : d(f^n(x), f^n(y)) \leq r_0 \ \forall n \geq 0
\\ \text{ and } \limsup_{n \rightarrow \infty} \frac{1}{n} \log d(f^n(x), f^n(y)) < 0 \},
\end{multline}
$r_0$ being a small constant. $\Wlu(f, x)$ is defined analogously. It is a well-known theorem of Pesin \cite{MR0458490} that, for $m$-almost every $x \in M$, $\Wls (f, x)$ is a $C^2$ embedded disk, tangent to $E^{cs}$ in our setting. 
The importance of the stable sets is that if, for some $x \in M$, continuous $\varphi: M \rightarrow \mathbb{R}$,  the forward Birkhoff average
\begin{equation*}
 \varphi_+ (x) := \lim_{n \rightarrow \infty} \frac{1}{n} \sum_{k=0}^{n-1} \varphi( f^k(x))
\end{equation*}
exists, then it also exists, and coincides, for every $y \in W^s(x)$, i.e. $\varphi_+(y) = \varphi_+(x)$. Similarly for $W^u(x)$ and averages in backword time.
However, $W^s(x)$ and $W^u(x)$ are sets
about which we know very little in general. In particular, it is not clear whether there is a natural way to talk about the size of $W^s(f,x)$. 
It is therefore more convenient to work with the \emph{global stable manifold} of $x$: 
\begin{equation*}
\Wsu(f, x):= \bigcup_{n\geq 0} f^n (\Wlu(f^{-n}(x))) \subset W^u(f,x).
\end{equation*}
It is an immersed, rather than embedded manifold, and it is known from Pesin's work that $\Wsu(x)$ can be characterised as the set of those $y\in M$ for which $d(f^n(x),f^n(y))$ converge to zero exponentially fast. Everything we define, state, or prove about local or global unstable manifolds has a counterpart for local or global stable manifolds and vice verse. The necessary notation and terminology should be obvious and we will not always bother to mention it. 

Even though the rate of convergence of $d(f^n(x),f^n(y))$ is irrelevant to the Hopf-Anosov argument, we shall concern ourself with the structure of $\Wsu(f,x)$ rather than $W^u(f,x)$. Being a countable union of nested embedded disks, the former is simply much more tangible. We are going to  define a notions of size of $\Wlu(f,x)$, called the \emph{span} and the \emph{essential span}. For the purpose of this introduction, it suffices to say that $W^s(f,x)$ has span (at least) $\delta>0$ if it contains a round disk (intrinsic ball) of radius $\delta$, centered at $x$. In the case of essential span, we allow the disk to be perforated by little holes of zero measure (see Definition \ref{span}).  Thus, in order to say something general about the size of $\Wsu(x)$ for Lebesgue almost every $x$, we should find a way to approach the following question:

\begin{prob}
What condition guarantees that most (Lebesgue almost all) local unstable (stable) manifolds grow under forward (backward) iteration of the diffeomorphism?
\end{prob}

Some clarification is called for. It is not difficult to show that, given any non-uniformly Anosov diffeomorphism $f$, and Lebesgue almost any point $x \in M$, the volume of $f^n(\Wlu(x))$ grows exponentially fast as $n \rightarrow \infty$. The problem is \emph{how} it grows. If the growth resembles that of an inflating balloon, one finds that, for large values of $n$, the larger part (in terms of volume) of points in $f^n(\Wlu(x))$ admit a ball of some fixed radius $\delta>0$, entirely contained in $f^n(\Wlu(x))$. We prove that this is sufficient to guarantee Lebesgue almost everywhere uniform span of global unstable manifolds. If, on the other hand, the growth of $f^n(\Wlu(x))$ resembles that of a growing worm or --- even worse --- a growing tree, the same cannot be said. Hence the first type of growth is the desired one when proving ergodicity. Motivated by this balloon vs. tree thinking, we define the following notion: $f$ is said to be $cu$-inflatable if 
\begin{equation}\label{condition}
 \int \log \vert \det Df_{\vert E_x^{cu}} \vert dm
 > \log \left( \sup_{x\in M} \| \wedge^{\dim E^{cu}-1}Df_{\vert E_x^{cu}} \| \right).
\end{equation}

Here $\| \wedge^{\dim E^{cu}-1}Df_{\vert E_x^{cu}} \| $ denotes the maximum expansion rate, under the action of $Df$, of the volume of a $cu-1$-dimensional parallelpiped contained in $E_x^{cu}$.
Roughly, the $cu$-inflatability condition says that the average expansion, of a $cu$-dimensional volume element in $E^{cu}$, is larger than the maximum expansion of a $(cu-1)$-dimensional volume element in $E^{cu}$. However, the presence of logarithms in inequality (\ref{condition}) means that we ask for slightly more than that.
The notion of $cs$-inflatability is defined analogously. If a non-uniform Anosov diffeomorphism is both $cs$- and $cu$-inflatable, we say that it is bi-inflatable. 

\subsection*{We prove:}

\begin{itemize}
 \item $cu$-inflatability of a non-uniformly Anosov 
diffeomorphism implies the existence of at least one ergodic component on which Lebesgue almost every point has a global unstable manifold of 
infinite (arbitrarily large) essential span. 

\item Similarly, $cs$-inflatability implies the existence of at least one ergodic component on which Lebesgue almost every point has a global stable manifolds of 
infinite essential span. 
\item If some ergodic component has, Lebesgue almost everywhere, both global stable and global unstable manifolds of uniform essential span, then the component has full Lebesgue measure, so that the diffeomorphism is ergodic.
\item This situation persists under small $C^1$ perturbations, so the diffeomorphism is, in fact, stably ergodic.
\item Transitivity of a bi-inflatable non-uniformly Anosov diffeomorphism,  with $E^{cu}$ plaque uniquely integrable, implies stable ergodicity.
\item On tori, stable ergodicity can also be guaranteed for bi-inflatable non-uniformly Anosov diffeomorphisms by assuming the subbundles $E^{cu}$ and $E^{cs}$ to be approximately constant.
\item If $f$ is a non-uniformly Anosov diffeomorphism for which one of the $E^{\sigma}$, $\sigma = cs, cu$, is uniformly contracting/expanding, and the other one is inflatable, then $f$ is stably ergodic.
\end{itemize}

\subsection*{Acknowledgements}

The main ideas in this work were developed during my stay as a postdoc at ICMC-USP, S\~ao Carlos, under the supervision of Ali Tahzibi, to whom I owe much gratitude. The problem of the size of global stable manifolds germinated form our frequent discussions and owes much to him. The idea that transitivity may be used in order to obtain ergodicity of inflatable systems came from Alexander Arbieto in the form of spontaneous remarks during an early exposition of the work. Thank you. I would also like to thank Paul Schweitzer for being a patient listener and good adviser on geometric matters, and Jairo Bochi for showing interest and encouragement.

\section{Precise statement of results}\label{statements}

Throughout this paper, $M$ denotes a compact connected Riemannian manifold. Its dimension is required to be at least two, but interesting examples only appear in dimension greater than or equal to $3$.
The volume form obtained from the Riemannian metric induces a measure on the Borel $\sigma$-algebra. We denote its normalisation by $m$ and refer to it as Lebesgue measure. As usual, $\Diff_m^2(M)$ stands for the space of all volume preserving $C^2$-diffeomorphisms on $M$, endowed with the $C^2$ topology. A diffeomorphisms $f  \in \Diff_m^2(M)$ is said to be ergodic if $m$ is an ergodic measure for $f$. Furthermore, $f\in \Diff_m^2(M)$ is said to be stably ergodic if there exists a $C^1$-neighbourhood $\mathcal{U}$ of $f$ such that every $g \in \mathcal{U} \cap \Diff_m^2(M)$ is ergodic.

\begin{defn}
 We say that $f \in \Diff_m^2(M)$ admits a dominated splitting if we can write 
$T M = E^{cs} \oplus E^{cu}$, where $E^{cs}$ and $E^{cu}$ are non-trivial complementary subbundles, invariant under the action of $Df$:
\begin{align}
 Df(x) E_x^{cs} &= E_f(x)^{cs} \quad \forall x \in M, \\
 Df(x) E_x^{cs} &= E_f(x)^{cs} \quad \forall x \in M,
\end{align}
and if there exist numbers $C>0$, $1>\tau>0$, such that, at every $x \in M$, 
\begin{equation}\label{domination}
 \| Df_{\vert E_x^{cs}}^n \| \cdot \| (Df_{\vert E_x^{cu}}^n)^{-1} \| \leq C \tau^n \quad \forall n \geq 0.
\end{equation}

\end{defn}
Let $\DS \subset \Diff_m^2(M)$ denote the space of all $C^2$ volume preserving diffeomorphisms on $M$, admitting a dominated splitting $TM = E^{cs} \oplus E^{cu}$.
It is a well-known consequence of the uniform domination property (\ref{domination}) that $x \mapsto E_x^{cs}$ and $x \mapsto E_x^{cu}$ are continuous. Hence, by compactness of $M$, there is a uniform (in $x$) lower bound on the angle between $E_x^{cs}$ and $E_x^{cu}$. The labels $cs$ and $cu$ stand for `central stable' and `central unstable' (bundle), and are also used to denote the dimensions of these bundles.

 For $f \in \DS$, there is often more than one choice of dominated splitting, but it becomes unique once we fix the dimension of (say) $E^{cs}$. Throughout this paper we shall therefore treat $cs$ as a fixed integer $1\leq cs \leq d-1$ so that there is no harm in talking about \emph{the} dominated splitting $E^{cs}\oplus E^{cu} = E^{cs}(f)\oplus E^{cu}(f)$ of $f\in \DS$. By characterising dominated splittings in terms of invariant conefields, one can see that $E^{cs}(f)$ and $E^{cu}(f)$ persist under small $C^1$-perturbations of $f$. That is, if $g\in \Diff_m^2(M)$ is close enough to $f$ in the $C^1$ topology, then $g$ has a dominated splitting $TM = E^{cs}(g) \oplus E^{cu} (g)$ with $\dim E^{\sigma} (g)= \dim E^{\sigma}(f)$ for $\sigma = cs, cu$, and with the $E^{\sigma}(g)$ close to the $E^{\sigma}(f)$ in corresponding Grassmannian bundles. In particular, $\DS$ is open, not only in the $C^2$-, but also in the $C^1$-topology.

\begin{defn}
We say that $f \in \Diff_m^2(M)$ is \emph{non-uniformly Anosov},
and write $f \in \NA$, if $f$ admits a dominated splitting
$TM = E^{cs} \oplus E^{cu}$ such that the asymptotic conditions
\begin{equation} \label{contraction}
  \lambda^{cs}(f,x) := \limsup_{n \rightarrow \infty} \frac{1}{n} \log \| Df_{\vert E_x^{cs}}^n \| < 0
  \end{equation}
  and
  \begin{equation}\label{expansion}
  \lambda^{cu}(f,x):= \limsup_{n \rightarrow \infty} \frac{1}{n} \log \|Df_{\vert E_x^{cu}}^{-n} \| < 0
  \end{equation}
 are satisfied at $m$-almost every $x \in M$.
\end{defn}

We state a version of Pesin's stable manifold theorem, adapted to our context. Given $f\in \NA$, we denote by $H = H(f)$ the set of hyperbolic points, i.e. those $x\in M$ for which (\ref{contraction}) and (\ref{expansion}) hold.

\begin{thm}[Pesin \cite{MR0458490}]
Given $f\in \NA$ and sufficiently small $r_0>0$, there exists a positive measurable function $r: H \rightarrow \mathbb{R}$, with 
\begin{equation*}
\lim _{n \rightarrow \pm \infty} \frac{1}{n} \log r(f^n(x)) = 0, 
\end{equation*}
and  such that, for every $x\in H$, 
\begin{align}
 \Wls(x)  := \{ y \in M: & d(f^n(x), f^n(y))< r_0 \text{ and } \\
&\lim_{n\rightarrow \infty} \frac{1}{n} \log d(f^n(x), f^n(y)) <0 \}, \\
 \Wlu(x)  := \{ y \in M: & d(f^{-n}(x), f^{-n}(y))< r_0 \text{ and } \\
&\lim_{n\rightarrow \infty} \frac{1}{n} \log d(f^{-n}(x), f^{-n}(y)) <0 \}
\end{align}
are $C^2$ embedded disks, tangent to $E^{cs}$ and $E^{cu}$ respectively, given by
\begin{equation*}
 \Wls(x)  := \exp_x (\graph(\psi_x^s)), \quad \Wlu(x) := \exp_x (\graph(\psi_x^u)),
\end{equation*}
 where $\psi_x^s: E_x^{cs}(r_0)\rightarrow E_x^{cu}$ and $\psi_x^u: E_x^{cu}(r_0) \rightarrow E_x^{cs}$ are $C^2$ maps with both $D\psi_x^s(0)$ and $D\psi_x^u(0)$ of zero rank.
\end{thm}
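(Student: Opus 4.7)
The plan is to adapt Pesin's classical graph-transform proof of the stable manifold theorem, taking advantage of the fact that in our setting the dominated splitting $E^{cs}\oplus E^{cu}$ is already continuous and uniformly transverse. In standard Pesin theory one performs a measurable change of coordinates (Lyapunov charts) to reveal the hyperbolicity; here one only needs a tempered rescaling of the exponential chart.

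First, I would exhaust $H$ by Pesin blocks. Fix $\lambda<0$ and $\epsilon>0$ with $\lambda+\epsilon<0$, and for $k\in\mathbb{N}$ put
\[
R_k := \bigl\{x\in H : \|Df^n_{|E^{cs}_x}\| \leq k e^{(\lambda+\epsilon)n},\ \|Df^{-n}_{|E^{cu}_x}\| \leq k e^{(\lambda+\epsilon)n}\ \text{for all } n\geq 0\bigr\}.
\]
These sets are closed by continuity of the splitting, nested in $k$, and satisfy $m(\bigcup_k R_k)=1$ by the definition of $\NA$. Writing a chart point at $x$ as $(u,s)\in E^{cu}_x\oplus E^{cs}_x$, the uniform lower bound on $\angle(E^{cu},E^{cs})$ provides a single radius $\rho_0>0$ such that
\[
\tilde f_x := \exp^{-1}_{f(x)}\circ f\circ\exp_x : B(0,\rho_0)\to E^{cu}_{f(x)}\oplus E^{cs}_{f(x)}
\]
splits as $\tilde f_x(u,s)=(A_x u + h_x(u,s),\, B_x s + g_x(u,s))$, with $A_x=Df|_{E^{cu}_x}$, $B_x=Df|_{E^{cs}_x}$, and $h_x,g_x$ of second order at $0$.

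Next I would run a backward graph transform along $\{f^{-n}(x)\}_{n\geq 0}$ in the space of $C^1$ graphs $\psi:E^{cu}_y(r(y))\to E^{cs}_y$ (for $y$ along the orbit) with $\psi(0)=0$, $D\psi(0)=0$, and a small uniform Lipschitz bound on $D\psi$. On each $R_k$, domination together with non-uniform hyperbolicity gives a definite margin between the backward expansion of $E^{cu}$ and any growth along $E^{cs}$; if the point-dependent radius $r(x)$ is chosen small enough relative to the second-order errors $h_x,g_x$ and to the block constant $k$, the transform takes this function space into itself and contracts it in $C^0$. The $C^0$-limit is $\psi^u_x$, and $\Wlu(x):=\exp_x\graph(\psi^u_x)$ is the desired disk. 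The zero-rank condition $D\psi^u_x(0)=0$ is built into the function space.

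The main obstacle is to construct the measurable radius $r(x)$ valid along the \emph{entire} orbit, and to establish the temperedness $n^{-1}\log r(f^n x)\to 0$. This is handled by the standard Pesin tempering trick: let $C(y)$ be the smallest constant needed to place $y$ in an $R_k$-style set, and set
\[
r(x) := \rho_0\,\inf_{n\in\mathbb{Z}} e^{-\epsilon|n|}/C(f^n x).
\]
The inequality $C(f(x))\leq e^{\epsilon}C(x)+O(1)$, immediate from the definition of $C$, yields the required subexponential variation of $r$ along orbits. Once $\psi^u_x$ is obtained as a Lipschitz graph, its $C^2$-regularity is bootstrapped by rerunning the fixed-point argument in spaces of $2$-jets, in the style of the Hirsch--Pugh--Shub invariant section theorem. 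Exchanging the roles of forward and backward iterates produces $\psi^s_x$ and $\Wls(x)$.
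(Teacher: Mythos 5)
This statement is Pesin's stable manifold theorem; the paper does not prove it but simply quotes it from \cite{MR0458490}, so there is no internal proof to compare against. Your outline is the standard graph-transform proof of that classical result, and its overall architecture (Pesin blocks, tempered radius, backward graph transform, jet bootstrap for $C^2$ regularity) is the right one. Two remarks on the set-up: first, with a \emph{fixed} $\lambda<0$ the sets $R_k$ do not exhaust $H$, since the definition of $\NA$ gives no uniform upper bound on the contraction exponents; a point with $\limsup_n \frac1n\log\|Df^n_{|E^{cs}_x}\|$ between $\lambda+\epsilon$ and $0$ lies in no $R_k$. You need a doubly indexed family $R_{j,k}$ with $\lambda_j\uparrow 0$. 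Second, the temperedness argument is asserted rather than proved: the multiplicative inequality $C(f(x))\leq e^{\epsilon}C(x)+O(1)$ only gives $C(f^nx)=O(e^{\epsilon n})$, which is not enough to make $\inf_n e^{-\epsilon|n|}/C(f^nx)$ positive. One needs either a strictly larger exponent in the infimum, or the standard tempering-kernel lemma, which rests on the a.e.\ statement $\frac1n\log C(f^nx)\to 0$ (itself requiring a Borel--Cantelli or maximal-ergodic argument, not just the pointwise definition of $C$).

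The more substantive gap is that you construct a disk but never identify it with the set named in the statement. The theorem asserts that the \emph{dynamically defined} set $\Wls(x)$ --- all $y$ whose forward orbit stays $r_0$-close to that of $x$ and converges exponentially --- coincides with $\exp_x(\graph\psi^s_x)$, with $\psi^s_x$ defined on the full ball $E^{cs}_x(r_0)$ of uniform radius. Your fixed-point argument produces an invariant graph over a ball of the point-dependent radius $r(x)$, which is the existence half only. To finish one must show (i) that any $y$ satisfying the two dynamical conditions lies on the constructed graph --- in this dominated setting this follows from a cone/shadowing argument: the vector $\exp_x^{-1}(y)$ must lie in the $E^{cs}$-cone at every forward iterate, else domination forces $d(f^ny,f^nx)$ to leave the $r_0$-ball or to fail the exponential decay --- and (ii) that the graph genuinely extends over the uniform radius $r_0$ rather than only over $r(x)$, which again uses the uniform transversality and invariant cone fields coming from domination (this is precisely where the present setting is easier than general Pesin theory). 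Without step (i) the characterisation of $\Wls(x)$ and $\Wlu(x)$, which is what the rest of the paper actually uses, is not established.
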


Given a $C^1$ immersed submanifold $N \subset M$ we can restrict the Riemannian metric of $M$ to $N$ to obtain a family of inner products $\{ \langle \cdot , \cdot \rangle_p^N : p \in N \}$, depending continuously on $p \in N$. With some abuse of language we call it a Riemannian metric, even though it is not (necessarily) smooth, and denote by $m_N$ the (not normalised) volume measure on $N$ inherited by it. We consider $m_N$ as a measure on $M$ by the obvious way by letting $m_N(E):= m_N(E\cap N)$ for every Borel set $E \subset M$. The total mas $m_N(M) = m_N(N)$ of $m_N$ is denoted by $\vert m_N \vert$. The intrinsic distance in $N$ is denoted by $d^N(\cdot, \cdot)$ and, for $\delta>0$, we write $B_{\delta}^N (x):= \{y\in N: d^N(x,y)<\delta \}$. Note that if $N$ is an immersed submanifold which is not actually a submanifold, the topology on $N$ induced by the metric $d^N(\cdot, \cdot)$ does not have to coincide with that induced by considering $N$ as a subset of $M$.

Let $D_1,$ and $D_2$ be $cs$-dimensional $C^1$-submanifolds transverse to $E^{cu}$, $\Lambda \subset H$,  and $\mathcal{W} = \{W_x := \Wlu(x): x \in \Lambda\}$ a family of local stable manifolds intersecting each $D_i, \ i=1,2$ in exactly one point. We may then define the holonomy map $h$, associated to $(\mathcal{W}, D_1, D_2)$ by
\begin{align}
 h: \{D_1 \cap W_x: x\in \Lambda\} & \rightarrow \{D_2 \cap W_x: x \in \Lambda\} \\
          D_1 \cap W_x & \mapsto D_2 \cap W_x.
\end{align}

\begin{thm}[Pesin \cite{MR0458490}]
 The holonomy map $h$ associated to $(\mathcal{W},D_1, D_2)$ maps sets of zero $m_{D_1}$-measure into sets of zero $m_{D_2}$-measure.
\end{thm}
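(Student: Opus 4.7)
The plan is to reduce the statement to a uniform situation on Pesin regular sets and then to establish the absolute continuity of $h$ by expressing its Jacobian as an infinite product obtained through backward iteration, exploiting that unstable leaves contract exponentially under $f^{-n}$.

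First I would exhaust $\Lambda$ (up to measure zero) by an increasing family of compact Pesin blocks $\Lambda_\ell$ on which the function $r(\cdot)$ of Pesin's theorem is bounded below by some $r_\ell>0$, on which a uniform backward-contraction estimate $\|Df^{-n}_{\vert E_x^{cu}}\| \leq C_\ell e^{-\lambda n}$ holds, and on which $x\mapsto \Wlu(x)$ is continuous in the $C^1$-topology. Since the $D_i$ are transverse to $E^{cu}$, after shrinking the $D_i$ the family $\mathcal{W}\vert_{\Lambda_\ell}$ becomes a continuous lamination meeting each $D_i$ transversally, and the corresponding holonomy $h_\ell$ is a homeomorphism onto its image. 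Because $h(E)\subset \bigcup_\ell h_\ell(E\cap\Lambda_\ell)$ up to a null set, it suffices to show that each $h_\ell$ maps $m_{D_1}$-null sets to $m_{D_2}$-null sets.

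Next I would iterate backwards. For any $n\geq 0$ we may factor $h_\ell = f^n \circ h_n \circ f^{-n}$, where $h_n$ is the holonomy between $f^{-n}(D_1)$ and $f^{-n}(D_2)$ along the pulled-back leaves $f^{-n}(W_x)$. Since distances along unstable leaves in $\Lambda_\ell$ contract like $e^{-\lambda n}$ under $f^{-n}$, and domination forces $T_q f^{-n}(D_i) \to E_q^{cs}$, the map $h_n$ is $C^0$-close to the identity for large $n$, with $\operatorname{Jac}(h_n)\to 1$ uniformly. The chain rule then gives
\begin{equation*}
 \operatorname{Jac}(h_\ell)(p) = \operatorname{Jac}(h_n)(f^{-n}(p))\cdot \prod_{k=0}^{n-1} \frac{\operatorname{Jac}\bigl(f\vert_{f^{-k-1}(D_2)}\bigr)(f^{-k-1}(h_\ell(p)))}{\operatorname{Jac}\bigl(f\vert_{f^{-k-1}(D_1)}\bigr)(f^{-k-1}(p))}.
\end{equation*}
Letting $n\to\infty$ identifies $\operatorname{Jac}(h_\ell)$ with an infinite product. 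If this product converges uniformly to a positive, bounded, measurable limit, then $(h_\ell)_*m_{D_1}\ll m_{D_2}$, and the theorem follows.

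The main obstacle is therefore the telescoped distortion bound
\begin{equation*}
 \sum_{k\geq 0}\Bigl\lvert \log\operatorname{Jac}\bigl(f\vert_{f^{-k-1}(D_1)}\bigr)(f^{-k-1}(p)) - \log\operatorname{Jac}\bigl(f\vert_{f^{-k-1}(D_2)}\bigr)(f^{-k-1}(h_\ell(p)))\Bigr\rvert \leq K_\ell,
\end{equation*}
uniformly in $p$ on the Pesin block. Each summand compares the action of $f$ on two $cs$-planes based at points of a common unstable leaf whose separation is $O(e^{-\lambda k})$. A geometric decomposition splits it into a contribution from the displacement of the basepoint along the unstable leaf and a contribution from the variation of $T_\bullet f^{-k}(D_i)$, which asymptotes to $E^{cs}$. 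Both are controlled provided $E^{cs}$ is H\"older continuous along unstable leaves on $\Lambda_\ell$. Proving this H\"older regularity --- a classical but delicate consequence of the dominated splitting (\ref{domination}) combined with the bunching information available on $\Lambda_\ell$ --- is where the real work lies; once it is in hand, the series converges geometrically and the remainder of the argument is a routine Fubini-type computation.
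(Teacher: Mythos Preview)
The paper does not prove this theorem at all: it is simply quoted as a classical result of Pesin, with a citation to \cite{MR0458490}, and used as a black box throughout the rest of the argument. There is therefore no proof in the paper to compare your proposal against.

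That said, your sketch is the standard route to Pesin's absolute continuity theorem: restrict to Pesin blocks to gain uniformity, factor the holonomy through backward iterates so that the pulled-back transversals become $C^1$-close, and control the resulting infinite-product Jacobian via a bounded-distortion estimate that ultimately rests on H\"older regularity of $E^{cs}$ along unstable leaves. This is precisely the strategy in Pesin's original work and in later expositions (e.g.\ Barreira--Pesin, Pugh--Shub). A couple of minor points of care: the exhaustion should be of the domain of $h$ in $D_1$ (i.e.\ the intersection points $D_1\cap W_x$) rather than of $\Lambda$ itself, and the H\"older estimate you need is really for the map $x\mapsto \operatorname{Jac}(f\vert_{E_x^{cs}})$ along unstable leaves, which uses the $C^2$ regularity of $f$ assumed in the paper together with the domination (\ref{domination}). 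With those adjustments the outline is sound, but for the purposes of this paper the theorem is imported wholesale from \cite{MR0458490} and no proof is expected.
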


Naturally there is an analogue for the holonomy map associated to local stable manifolds.

\begin{defn}\label{span}
We say that a $k$-dimensional $C^1$-immersed submanifold $N\subset M$, $k=1, \ldots d$, has $k$-span $\delta>0$ around $p\in N$ if for every $\epsilon>0$
\begin{equation} \label{annulus 1}
 B_{\delta}^{N}(x) \setminus \overline{B}_{\delta - \epsilon}^{N}(x) = 
\{ y \in N: \delta-\epsilon < d^N(x,y) < \delta \}
\end{equation}
is homeomorphic to the annulus
\begin{equation}\label{annulus 2}
 \{ z \in \mathbb{R}^{cu} : 1< \| z \| < 2 \}.
\end{equation}

A Borel subset $E \subset M$ has essential $k$-span $\delta>0$ around $x$ if there exists a $C^1$-immersed $k$-dimensional submanifold $N\subset M$ of such that $N$ has span $\delta$, $x\in N$,  and 
\begin{equation*}
 m_N(E\cap B_{\delta}^N (x))= m_N(B_{\delta}^N(x)).
\end{equation*}

If $\Wsu(x)$ has $cu$-span $\delta$ around $x$, we say that $x$ has an unstable manifold of span $\delta$.
If $\Wsu(x)$ has essential $cu$-span $\delta$ around $x$, we say that $x$ has an essential unstable manifold of span $\delta$.

If $\Wsu(x)$ has span (or essential span) $\delta$ for every $\delta>0$ we say that $\Wsu(x)$ has infinite span (or essential span).
\end{defn}

The same definitions carry over verbatim to the case of stable manifolds. Sometimes the term \emph{full} stable or unstable manifold will be used instead of stable or unstable manifolds to add more contrast to essential stable or unstable manifolds.

The requirement that (\ref{annulus 1}) be homeomorphic to (\ref{annulus 2}) is a way to make precise the statement that $\Wsu(x)$ contains a disk of radius $\delta$ around $x$. It can be explained as follows: Let $\Gamma = \{(x,y) \in \mathbb{R}^2: y>0 \} \subset \mathbb{R}^2$ be endowed with the usual Euclidian metric. Since $\Gamma$ is a subset of $\mathbb{R}^2$, 
it is meaningful to consider the set 
\begin{equation*}
 \Gamma_1 = \{ p \in \Gamma : B_1^{\mathbb{R}^2} (p) \subset \Gamma \}, 
\end{equation*}
i.e. $\Gamma_1 = \{(x, y) \in \mathbb{R}^2: y \geq 1 \}$, or the set of points with distance at least $1$ form $\partial \Gamma$. However, if $\Gamma$ is viewed intrinsically rather than as the subset of $\mathbb{R}^2$, then the definition of $\Gamma_1$ becomes meaningless. Nor does $\Gamma$ possess any boundary. However, by defining $\Gamma_1$ as the set 
\begin{equation*}
\{p \in \Gamma: B_{1}^{\Gamma}(p) \setminus \overline{B}_{1-\epsilon}^{\Gamma}(p) \text{ is homeomorphic to the annulus} \}, 
\end{equation*}
 one finds that $\Gamma_1$ is, again, equal to $\{(x, y) \in \mathbb{R}^2: y \geq 1 \}$.

Given a positive Lebesuge measure set $A \subset M$, we denote by $m_A$ the normalised restriction of Lebesgue measure to $A$. ( The reader should be aware that the notation $m_S$ means two different things depending on whether $S$ is a submanifold of $M$ or a Borel subset of positive $m$-measure.)

\begin{defn}
Let $f \in \mathcal{DS}$. An $f$-invariant Borel set $A$ of positive Lebesgue measure is $cu$-inflatable  if, for some $n \geq 1$,
\begin{equation*}
 \int \log \vert \det Df_{\vert E_x^{cu}}^n \vert dm_A
 > \log \left( \sup_{x \in M} \| \wedge^{cu-1}Df_{\vert E_x^{cu}}^n \| \right), 
\end{equation*}
Similarly, $A$ is $cs$-inflatable if, for some $n \geq 0$,
\begin{equation*}
 \int \log \vert \det Df_{\vert E_x^{cs}}^{-n} \vert dm_A
 > \log \left( \sup_{x \in M} \| \wedge^{cu-1}Df_{\vert E_x^{cs}}^{-n} \| \right).
\end{equation*}
If $A$ is both $cs$- and $cu$-inflatable we say that $A$ is bi-inflatable. If $M$ is $cs$-, $cu$-, or bi-inflatable, we attribute this property to the diffeomorphism itself by saying that $f$ is $cs$-, $cu$-, or bi-inflatable, respectively.
\end{defn}

\begin{rem}\label{opencondition}
Being $cu$-, $cs$-, or bi-inflatable, are $C^1$-open conditions on diffeomorphisms in $\DS$. 
\end{rem}

We say that an $f$-invariant ergodic measure is an ergodic component of $m$ if it is the restriction of $m$ to some set of positive $m$-measure. Any set $A$ giving rise to an ergodic component $m_A$ in this fashion is uniquely defined up to a set of zero $m$-measure. It is therefore quite harmless to refer to $A$ as an ergodic component. Sometimes we say that $A$ is a representation of $m_A$.
Pesin's spectral decomposition theorem \cite{MR0466791} implies that, for every $f \in \NA$, one can write $M$ as the union of a finite or countable number of pairwise disjoint $f$-invariant sets $A_i$, with $m(A_i)>0$, all of which represent ergodic components $m_{A_i}$. 

\begin{thm}[Main technical result] \label{main}
 Let $f$ be a non-uniformly Anosov diffeomorphism on $M$, and $A$ one of its ergodic components. 
Suppose that $A$ is $cu$-inflatable, then $m$-almost every $x\in A$ has an essential unstable  manifold of infinite span. If, moreover,  $\dim E^{cu} \leq 2$, then $m$-almost every $x\in A$ has a full unstable manifold of infinite span.
\end{thm}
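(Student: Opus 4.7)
The approach makes precise the \emph{balloon vs.\ tree} heuristic: the inflatability hypothesis says that the average $cu$-volume expansion
\[
\lambda_+ := \int \log \vert \det Df^n_{\vert E_x^{cu}} \vert \, dm_A
\]
strictly exceeds the worst-case boundary expansion
\[
\gamma := \log \sup_{x \in M} \| \wedge^{cu-1} Df^n_{\vert E_x^{cu}} \|,
\]
so forward iterates of an unstable disk become exponentially "fatter" than they can be "stringy". Operationally, I would fix an $m$-typical $x \in A$, set $y_k := f^{-kn}(x)$ and $D_k := \Wlu(y_k)$, and show that the forward image $f^{kn}(D_k) \subset \Wsu(x)$ contains an intrinsic ball around $x$ of radius $\delta$ for every $\delta > 0$ once $k$ is large enough.

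By Birkhoff's theorem on $(A, m_A)$ combined with absolute continuity of the Pesin stable foliation, for $m_{D_k}$-a.e.\ $z$ one has $k^{-1}\log \vert \det Df^{kn}_{\vert E_z^{cu}} \vert \to \lambda_+$; Fatou then gives
\[
\liminf_{k \to \infty} k^{-1} \log V_k \geq \lambda_+, \qquad V_k := m_{f^{kn}(D_k)}(f^{kn}(D_k)),
\]
while $B_k$, the $(cu-1)$-volume of $\partial f^{kn}(D_k) = f^{kn}(\partial D_k)$, trivially satisfies $B_k \leq e^{k\gamma} B_0$; hence $B_k/V_k \to 0$ exponentially. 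A tubular-neighborhood estimate on the $cu$-submanifold $f^{kn}(D_k)$ bounds the $m_{f^{kn}(D_k)}$-measure of the intrinsic $\delta$-neighborhood of $\partial f^{kn}(D_k)$ by a constant times $\delta B_k$, so for any fixed $\delta > 0$ and $k$ large, at least a $(1-\epsilon)$-fraction of $f^{kn}(D_k)$ lies at intrinsic distance $\geq \delta$ from the boundary. Every such interior point $y$ has $B^N_\delta(y) \subset N := f^{kn}(D_k) \subset \Wsu(y)$ realised as a smooth embedded $cu$-disk, which witnesses essential unstable span $\geq \delta$ at $y$.

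To pass from this leafwise "large proportion" statement to an $m$-almost-everywhere assertion on $A$, I would combine: absolute continuity of the stable holonomy (to turn the leafwise proportion into a positive lower bound on $m(U_\delta)$, where $U_\delta \subset A$ is the set of points with essential span $\geq \delta$); the $f$-invariance of $\{\sigma = \infty\}$, where $\sigma$ denotes essential span, which follows from the bound $\sigma(f(x)) \geq \sigma(x)/\| Df_{\vert E^{cu}} \|_\infty$; a uniform-in-$\delta$ lower bound on $m(U_\delta)$ extracted from a single Pesin block; and continuity of $m$ from above on the decreasing family $U_\delta$ to conclude $m(\{\sigma = \infty\}) > 0$, whence full $m_A$-measure by ergodicity. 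A countable intersection over $\delta = 1,2,\ldots$ then yields infinite essential span $m_A$-a.e. For the refinement when $\dim E^{cu} \leq 2$, one upgrades "essential" to "full" span by excluding perforations of $\Wsu(y)$: for $cu = 1$, the partition property of global unstable manifolds forces $\Wsu(y)$ to be locally a single arc near a typical $y$; for $cu = 2$, a transverse Fubini argument combined with absolute continuity of the stable foliation shows that any such perforations carry zero Lebesgue measure.

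The main technical obstacle is the tubular-neighborhood estimate. Since the intrinsic curvature of $f^{kn}(D_k)$ is not uniformly controlled as $k$ grows, one cannot appeal to a Euclidean isoperimetric bound directly; the constant must be extracted by covering $D_k$ by local Pesin-type charts on which $Df^{kn}$ has bounded distortion, estimating the tubular-neighborhood measure locally on each chart, and reassembling the bound. A secondary subtlety is transferring the conclusion from "$m_{f^{kn}(D_k)}$-most points" to the specific point $x$ itself; I would handle it via the $f$-invariant "span $=\infty$" reformulation above rather than trying to place $x$ in the deep interior of $f^{kn}(D_k)$ pointwise.
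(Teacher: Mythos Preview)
Your overall strategy --- volume of the iterated disk grows faster than its boundary, so most points end up far from the boundary --- is the correct heuristic and is essentially what the paper does. However, there is a genuine gap at exactly the step you flag as the main obstacle, and your proposed fix does not work.

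The tubular-neighbourhood estimate you want, bounding the $m_N$-measure of the intrinsic $\delta$-neighbourhood of $\partial N$ by a constant times $\delta\,\vert m_{\partial N}\vert$, is \emph{false} when $\dim N \geq 3$, and no amount of Pesin-chart distortion control will rescue it: the failure occurs already for smooth domains in flat $\mathbb{R}^3$. Take the closed unit ball and remove a sequence of thin wormholes (tubes of radii $r_j\to 0$) approaching the origin; the resulting manifold-with-boundary has boundary area as small as one likes while the origin is arbitrarily close to $\partial N$, so the $\delta$-tube of the boundary covers nearly the whole ball. This is precisely why the paper does \emph{not} try to show that most points are far from the boundary. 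Instead it introduces $\theta(p)=m_{\partial D}(B_\delta^D(p))$, which measures how much boundary is \emph{visible} from $p$, and proves by a one-line Fubini swap (Proposition~\ref{geometric}) that $\int \theta\, dm_D \leq K\,\vert m_{\partial D}\vert$, with $K$ a uniform bound on $m_D(B_\delta^D(p))$ coming from tangency to $E^{cu}$. Chebyshev plus inflatability then gives a full-measure set $G\subset A$ of points that see arbitrarily little boundary in some forward iterate. Having small $\theta$ is strictly weaker than being far from $\partial D$, so a separate, fairly delicate graph construction is needed to manufacture from each $x\in G$ a $C^1$ immersed $cu$-disk $\Gamma_x$ through $x$ in which $\Wsu(x)$ has full leaf measure. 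That is the origin of the word \emph{essential}: one only obtains a disk possibly perforated by zero-measure holes. The upgrade to full span for $cu\leq 2$ is then the converse geometric observation that in dimension $\leq 2$ small visible boundary \emph{does} force large distance to the boundary; your proposed argument for $cu=2$ via the stable foliation is off target.

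A second, smaller issue: you fix $x$ and iterate the backward orbit, so the disk $D_k=\Wlu(f^{-kn}(x))$ changes with $k$ and its size decays (subexponentially) along the orbit, which makes the Birkhoff/Fatou step for $V_k$ awkward to justify cleanly. The paper instead fixes $\Wlu(x)$, iterates it forward, uses conservativity of $f$ to pull the conclusion back, and passes from leafwise to $m$-a.e.\ statements via a custom non-singular disintegration $m=\int \tilde m_x^u\,dm(x)$ (Proposition~\ref{disintegration}) rather than through your ``$\{\sigma=\infty\}$ is $f$-invariant plus ergodicity'' route.
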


\begin{rem} \label{no continuity}
 Notice that we have not claimed that the global unstable manifolds obtained in Theorem \ref{main} vary continuously in any sense, and it is not clear from our construction whether that is the case. The problem of continuity is intimately related to integrability of $E^{cu}$ and should explain why we need the assumption of integrability in Proposition \ref{thick} and Theorem \ref{staberg through transitivity}. 
\end{rem}

As an immediate consequence, we get:
\begin{cor} \label{at least one}
Suppose $f \in \NA$ is $cu$-inflatable. Then $f$ has at least one $cu$-inflatable ergodic component, $A$ say. Hence $m$-almost every $x\in A$ has an essential unstable manifold of infinite span.
\end{cor}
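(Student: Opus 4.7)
The plan is to reduce the corollary to the Main Technical Result (Theorem \ref{main}) by a simple averaging argument on Pesin's spectral decomposition. Let $n \geq 1$ be the integer witnessing the $cu$-inflatability of $f$, so that
\begin{equation*}
\int_M \log \vert \det Df_{\vert E_x^{cu}}^n \vert \, dm > \log\Bigl( \sup_{x \in M} \| \wedge^{cu-1} Df_{\vert E_x^{cu}}^n \| \Bigr).
\end{equation*}
The function $x \mapsto \log \vert \det Df_{\vert E_x^{cu}}^n \vert$ is continuous on the compact manifold $M$, hence bounded and integrable against every finite Borel measure on $M$.

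Next I would invoke Pesin's spectral decomposition theorem, as recalled just before the corollary, to write $M$ (up to a set of zero $m$-measure) as a disjoint countable union $\bigsqcup_i A_i$ of $f$-invariant ergodic components with $m(A_i) > 0$. Since $m_{A_i}$ is the \emph{normalised} restriction of $m$ to $A_i$, the identity
\begin{equation*}
\int_M \varphi \, dm = \sum_i m(A_i) \int_M \varphi \, dm_{A_i}
\end{equation*}
holds for every bounded measurable $\varphi$. Setting $\varphi = \log \vert \det Df_{\vert E^{cu}}^n \vert$, the left side exceeds $C_n := \log\bigl( \sup_{x \in M} \| \wedge^{cu-1} Df_{\vert E_x^{cu}}^n \| \bigr)$ by hypothesis, and $\sum_i m(A_i) = 1$, so the right side is a convex combination of the quantities $\int \varphi \, dm_{A_i}$. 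If every one of these were $\leq C_n$, the sum would also be $\leq C_n$, a contradiction. Hence there exists at least one index $i$ with
\begin{equation*}
\int \log \vert \det Df_{\vert E_x^{cu}}^n \vert \, dm_{A_i} > C_n,
\end{equation*}
which is precisely the statement that the ergodic component $A := A_i$ is $cu$-inflatable.

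Having produced a $cu$-inflatable ergodic component $A$, the conclusion that $m$-almost every $x \in A$ has an essential unstable manifold of infinite span is a direct application of Theorem \ref{main}. There is essentially no obstacle here: the only mild issue is verifying that the additive decomposition of the integral along components is licit, which follows because $\log \vert \det Df_{\vert E^{cu}}^n \vert$ is bounded and the $A_i$ partition $M$ mod $m$-null sets. No use of the second clause of Theorem \ref{main} (requiring $\dim E^{cu} \leq 2$) is needed to obtain the essential span statement claimed in the corollary.
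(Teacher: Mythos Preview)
Your argument is correct and is exactly the standard averaging/convexity argument the paper has in mind; the paper offers no proof beyond calling the corollary ``an immediate consequence'' of Theorem~\ref{main}, and your decomposition of $\int_M \log|\det Df^n_{|E^{cu}}|\,dm$ along Pesin's ergodic components, together with the pigeonhole on the convex combination, is precisely how one makes that immediacy explicit.
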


Given some bi-inflatable $f \in \NA$, we know from Corollary \ref{at least one} that it has at least one $cu$-inflatable ergodic component, say $A_{cu}$, and at least one $cs$-inflatable component, say $A_{cs}$. By the following proposition, the problem of ergodicity of $f$ reduces to establishing under what condition the two components coincide. 

\begin{prop}[First auxillary result] \label{bi-inflatable implies ergodicity}
Let $f\in \DS$ and suppose there exists a Borel set $A\subset M$ of positive Lebesgue measure with  $f_{\vert A}$ ergodic and such that, at $m$-almost every $x\in A$,  $\lambda_+^{cs}(x)<0$ and $\lambda_-^{cu}>0$. Suppose, furthermore, that for some $\delta>0$,  $m$-almost every $x\in A$ has an essential stable and unstable manifold of span $\delta$. Then $m(A) = 1$ so that $f$ is ergodic.
\end{prop}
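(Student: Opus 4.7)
The plan is to run a Hopf-type argument in which the essential stable and unstable disks of uniform span $\delta$ take over the role that uniform Pesin manifolds play in the classical Anosov setting. Fix a continuous $\varphi: M \to \mathbb{R}$ and set $c = \frac{1}{m(A)}\int_A \varphi\, dm$. By ergodicity of $f|_A$ one has $\varphi_+ = c$ at $m_A$-a.e.\ $x\in A$, and by Birkhoff $\varphi_+ = \varphi_-$ at $m$-a.e.\ $x\in M$. It suffices to show $\varphi_+ = c$ at $m$-a.e.\ point of $M$, since then $\int_M \varphi\, dm = c = \frac{1}{m(A)}\int_A \varphi\, dm$ for every continuous $\varphi$, and Riesz duality forces $m = m|_A/m(A)$, hence $m(A) = 1$.

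The elementary Hopf property along essential disks is immediate: if $x$ carries an essential unstable disk $N^u_x$ of span $\delta$, then $\Wsu(f,x)$ has full $m_{N^u_x}$-measure in $B_\delta^{N^u_x}(x)$, and as $\varphi_-$ is pointwise constant on $\Wsu(f,x)$, it equals $\varphi_-(x)$ at $m_{N^u_x}$-a.e.\ point of that disk. An analogous statement holds for essential stable disks and $\varphi_+$.

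Pick $x_0\in A$ Pesin-regular, with essential disks $N^s_0, N^u_0$ of span $\delta$ and $\varphi_\pm(x_0) = c$. The central step is to show that for $m_{N^u_0}$-a.e.\ $y\in B_\delta^{N^u_0}(x_0)$ the point $y$ belongs to $A$, is Pesin-regular, carries its own essential stable disk $N^s_y$ of span $\delta$, and satisfies $\varphi_+(y) = c$. This rests on three ingredients: (i) at common points the $C^1$ submanifolds $N^u_0$ and $\Wsu(f,x_0)$ must share a tangent space, so the intrinsic volumes $m_{N^u_0}$ and $m_{\Wsu(f,x_0)}$ coincide on their full-measure overlap; (ii) $f$-invariance of $A$ combined with absolute continuity of Pesin's unstable foliation forces $A$ to have full $m_{\Wsu(f,x_0)}$-measure in $\Wsu(f,x_0)$, which by (i) transfers to $N^u_0$; (iii) the same absolute continuity transfers the $m$-a.e.\ identity $\varphi_+ = c$ on $A$ into an $m_{N^u_0}$-a.e.\ identity on $B_\delta^{N^u_0}(x_0)$.

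Applying the Hopf property to each such $y$ yields $\varphi_+ = c$ at $m_{N^s_y}$-a.e.\ point of $B_\delta^{N^s_y}(y)$. Since the angle between $E^{cs}$ and $E^{cu}$ is bounded below on the compact manifold $M$, the stable disks $\{N^s_y\}_y$ are uniformly transverse to $N^u_0$, so their union contains an open ball $B_\rho(x_0)\subset M$ of uniform radius $\rho = \rho(\delta) > 0$; a Fubini-type argument based on absolute continuity of Pesin's stable holonomies then gives $\varphi_+ = c$ at $m$-a.e.\ point of $B_\rho(x_0)$. Setting $V = \{z\in M : \{\varphi_+ = c\}$ has Lebesgue density $1$ at $z\}$, we see that $V$ is open, of measure $\geq m(A)$, and contains $B_\rho(x)$ for $m$-a.e.\ $x\in V$; hence $V$ is clopen in the connected manifold $M$, giving $V = M$ and $m(A)= 1$. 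The main obstacle is precisely this Fubini covering: the family $\{N^s_y\}$ is not assumed to vary continuously with $y$ (cf.\ Remark~\ref{no continuity}) nor to form an integrable foliation, so one needs Pesin's absolute continuity of stable holonomy, together with the uniform transversality of $E^{cs}$ and $E^{cu}$, to handle both the geometric covering of $B_\rho(x_0)$ and the measure-theoretic control of $\{\varphi_+ = c\}$ inside it.
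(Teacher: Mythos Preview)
Your approach follows the classical Hopf blueprint, and the ingredients (i)--(iii) transferring $A$ and $\{\varphi_+=c\}$ onto the essential unstable disk $N^u_0$ are fine (this is essentially Corollary~\ref{full measure in component}). The real gap is the step you yourself flag as the main obstacle: the assertion that $\bigcup_y N^s_y$ contains an open ball $B_\rho(x_0)$ and that a Fubini argument with Pesin's stable holonomy yields $\varphi_+=c$ at $m$-a.e.\ point there. Neither claim is justified. The essential stable disks $N^s_y$ are merely $C^1$ disks tangent to $E^{cs}$ that happen to contain $\Wss(f,y)$ in full leaf-measure; they are not Pesin local stable manifolds, they are defined only for $y$ in a full-measure (not full) subset of $N^u_0$, they need not vary continuously in $y$, and they need not form any kind of lamination. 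Uniform transversality of $E^{cs}$ and $E^{cu}$ at the base points does not imply that their union covers a ball. More seriously, Pesin's absolute continuity theorem concerns holonomy along genuine (iterated local) stable manifolds, not along arbitrary $C^1$ disks tangent to $E^{cs}$, so you have no absolute continuity result available for the family $\{N^s_y\}$ and hence no Fubini-type control on the measure of $\{\varphi_+=c\}$ inside whatever set they cover.

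The paper avoids this by not attempting a direct Fubini over essential disks. Instead it shows, by alternating saturation under essential stable and unstable disks of span $\delta$, that a suitable representative $A^\infty$ of the component is \emph{dense} (not a priori of full measure) in a ball of uniform radius. One then argues by contradiction: if a second ergodic component $B$ meets that ball in positive measure, choose $q$ so that $m_{\Wlu(q)}$-a.e.\ point of $\Wlu(q)$ is $B$-generic and carries a \emph{genuine} Pesin local stable manifold, and let $V_q$ be the union of these. Density of $A^\infty$ produces $y\in A^\infty$ arbitrarily close to $q$, so the essential unstable disk $\Gamma_y^u$ crosses the Pesin stable lamination $V_q$; now Pesin's absolute continuity, applied to this legitimate stable lamination, gives $m_{\Gamma_y^u}(V_q)>0$, contradicting $m_{\Gamma_y^u}(A)=\vert m_{\Gamma_y^u}\vert$. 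The idea missing from your argument is precisely this: apply absolute continuity to the Pesin local stable manifolds of the hypothetical rival component, not to the essential disks of $A$.
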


We will soon give conditions that imply the existence of a bi-inflatable component, hence ergodicity. Once ergodicity is obtained, stable ergodicity follows automatically from the second auxiliary result. Denote by $\mathcal{E}$ the set of ergodic diffeomorphisms in $\Diff_m^2(M)$, by $\BI$ the subset of $\DS$, consisting of bi-inflatable diffeomorphisms.

\begin{prop}[Second auxillary result] \label{EBINA is open}
The set $\mathcal{E}\cap \BI \cap \NA$ is open in the $C^1$ topology. 
\end{prop}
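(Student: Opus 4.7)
The plan is to leverage the openness of the bi-inflatability condition together with the main technical result (Theorem \ref{main}) and the first auxiliary result (Proposition \ref{bi-inflatable implies ergodicity}). By Remark \ref{opencondition}, $\BI$ is $C^1$-open, so there is a neighbourhood $\mathcal{U}_0$ of $f$ on which every diffeomorphism lies in $\BI$. The task reduces to exhibiting a smaller neighbourhood $\mathcal{U} \subset \mathcal{U}_0$ on which every $g$ is also ergodic and non-uniformly Anosov.

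Fix $g \in \mathcal{U}_0 \cap \Diff_m^2(M)$. Decomposing $m$ into the ergodic components of $g$, a pigeonhole argument applied to the bi-inflatability inequality gives at least one $g$-ergodic component $A_g^{cu}$ with $\int \log |\det Dg|_{E^{cu}}^N| \, dm_{A_g^{cu}} > \log \sup_x \|\wedge^{cu-1} Dg|_{E^{cu}}^N\|$, so $A_g^{cu}$ is itself $cu$-inflatable. By Kingman's subadditive ergodic theorem applied to the superadditive cocycle of smallest singular values of $Dg^n|_{E^{cu}}$, the smallest Lyapunov exponent in $E^{cu}$ is positive $m$-a.e.\ on $A_g^{cu}$. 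A symmetric construction gives a $cs$-inflatable ergodic component $A_g^{cs}$ satisfying condition (\ref{contraction}) a.e. Inspecting the proof of Theorem \ref{main}, whose arguments only use the hyperbolicity structure on the single ergodic component under consideration, one obtains essential unstable manifolds of infinite span at $m$-a.e.\ $x \in A_g^{cu}$ and, dually, essential stable manifolds of infinite span at $m$-a.e.\ $x \in A_g^{cs}$.

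The main obstacle is that a priori $A_g^{cu}$ and $A_g^{cs}$ need not coincide. The plan is to use the ergodicity of $f$ to force their coincidence for $g$ close to $f$: the $f$-Birkhoff averages of $\log|\det Df|_{E^{cu}}^N|$ and $\log|\det Df|_{E^{cs}}^{-N}|$ are both equal to their (positive) $m$-means a.e.\ on $M$, and a quantitative Birkhoff-plus-Egorov argument, combined with the $C^1$-closeness of $g$ to $f$, should show that no $g$-ergodic component of $m$-measure bounded away from zero can fail either inflatability inequality once $g$ is close enough to $f$. Thus for $g$ sufficiently close to $f$ one finds a common bi-inflatable $g$-ergodic component $A_g$ satisfying all hypotheses of Proposition \ref{bi-inflatable implies ergodicity}, which yields $m(A_g) = 1$ and hence $g \in \mathcal{E}$. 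Finally, ergodicity of $g$ together with the bi-inflatability inequality and another application of Kingman's theorem forces the pointwise sign conditions on Lyapunov exponents needed for $g \in \NA$, completing the proof.
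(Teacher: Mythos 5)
Your overall architecture is the same as the paper's: for each $g$ close to $f$, produce a single ergodic component that is bi-inflatable and on which the Lyapunov exponents have the correct signs, then invoke Theorem \ref{main} and Proposition \ref{bi-inflatable implies ergodicity}. Where you genuinely differ is in how you obtain the sign conditions: deducing positivity of the smallest exponent in $E^{cu}$ on a $cu$-inflatable ergodic component from the identity $\sigma_{\min}(Dg^n_{\vert E^{cu}}) = \vert \det Dg^n_{\vert E^{cu}}\vert / \Vert \wedge^{cu-1} Dg^n_{\vert E^{cu}}\Vert$ together with Kingman's theorem is correct and self-contained, and it replaces the paper's appeal to semicontinuity of the integrated exponents (Proposition \ref{semicont}); it also disposes of the final ``$g\in\NA$'' step cleanly once ergodicity of $g$ is known. (A minor caveat: before any hyperbolicity is established for $g$, the ergodic decomposition of $m$ under $g$ need not consist of countably many positive-measure components, so your pigeonhole must be run through the general ergodic decomposition rather than through Pesin's spectral decomposition.)

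The genuine gap is the step you yourself label the main obstacle. The statement you propose to prove --- that no $g$-ergodic component of measure bounded away from zero fails either inflatability inequality --- does not yield a common bi-inflatable component: a priori every ergodic component of $g$ could have measure below your threshold $\epsilon_0$, in which case the statement is vacuous and $A_g^{cu}$ and $A_g^{cs}$ may still be distinct. What the Birkhoff-plus-Egorov argument (applied to the ergodic $f$ and transferred to $g$ by $C^1$-closeness) actually provides --- and what the paper establishes, with threshold $\frac{1}{2}$ --- is the inequality $\frac{1}{m(A)}\int_A \xi_N^{\sigma}(g,x)\,dm(x) > \Xi_N^{\sigma}(g)$ for \emph{every} invariant set $A$ with $m(A)\geq\epsilon_0$, ergodic or not. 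To extract a single bi-inflatable component one must apply this to the union $S$ of all ergodic components that are not $cu$-inflatable: each such component satisfies the reverse inequality for $n=N$, so by convexity $\frac{1}{m(S)}\int_S \xi_N^{cu}\,dm \leq \Xi_N^{cu}(g)$, forcing $m(S)<\epsilon_0$; the same holds for the union of non-$cs$-inflatable components; hence the components that are simultaneously $cs$- and $cu$-inflatable carry measure at least $1-2\epsilon_0>0$ and at least one exists. Without this deployment of the estimate (or the paper's variant using the measure-$>\frac{1}{2}$ set of hyperbolic points), your argument stops exactly where the content of the proposition begins.
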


\subsection{Stable ergodicity through transitivity}\label{transitivity}

It is an interesting problem to understand under what conditions transitivity of a conservative diffeomorphism implies that it is ergodic. In fact, it was conjectured by Tahzibi in \cite{T} that every transitive non-uniformly Anosov diffeomorphism is ergodic. Although the conjecture remains enigmatic in its full generality, our current approach sheds some light on the question in the case of inflatable systems. 

If $f\in \NA $ is bi-inflatable, it has an ergodic component $A_{cs}$ on which $m$-almost every point has a stable manifold of infinite span, and an ergodic component $A_{cu}$ on which $m$-almost every point has an unstable manifold of infinite span. If we knew that both $A_{cs}$ and $A_{cu}$ are open, then transitivity of $f$ would imply that $A_{cs}=A_{cu}$. Hence, by Proposition \ref{bi-inflatable implies ergodicity} and Proposition \ref{EBINA is open}, it would follow that $f$ is stably ergodic. Since $A_{cs}$ and $A_{cu}$ are saturated by stable/unstable manifolds of uniform span, there is great hope that they are indeed open, as was pointed out to me by Alexander Arbieto during an early exposition of this work. However, we do not know whether that is necessarily the case, the main difficulty being that we do not know whether, given $\delta>0$, the stable (unstable) manifolds of span $\delta$, associated to $x\in A_{cs}$ ($A_{cu}$) vary continuously with $x$ (see Remark \ref{no continuity}). The remedy is to assume a suitable kind of integrability of the invariant bundles.
The kind of integrability that in a natural way implies continuous dependence of essential stable and unstable manifolds is the one that the authors in \cite{MR2410949} call \emph{plaque unique integrability}. 

\begin{defn}\label{plaque unique}
 We say that a continuous distribution $E$ of $k$-planes in $M$ is plaque uniquely integrable if there exists a foliation $\mathcal{F}$ of $M$ into $k$-dimensional immersed manifolds tangent to $E$, and such that, if $D$ is any $k$-dimensional disk tangent to $E$, then $D$ is contained in some leaf of $\mathcal{F}$.
\end{defn}

It is shown in \cite{MR1432307} that this notion is stronger than simply asking for the uniqueness of integral foliation to a given continuous subbundle of $TM$. The good thing about having $E^{cu}$ plaque uniquely integrable is that the essential unstable manifold of a point $x\in M$ of span $\delta>0$ is the ball of radius $\delta$ around $x$ in the unique leaf of the foliation, containing $x$. 

\begin{prop}\label{thick} 
 Suppose that the central-unstable bundle of $f\in \NA$ is plaque uniquely integrable. Then every $cu$-inflatable ergodic component is open, modulo a set of zero Lebesgue measure.
\end{prop}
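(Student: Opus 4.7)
The plan is to exhibit, for $m$-almost every $x \in A$, an open neighborhood $U_x$ with $m(U_x \setminus A) = 0$; the union of all such $U_x$ is then an open set coinciding with $A$ modulo a Lebesgue null set. The main tool will be a local product structure combining $\mathcal{F}^{cu}$-saturation coming from Theorem \ref{main} with stable-manifold Hopf saturation coming from Pesin theory.

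First I apply Theorem \ref{main} to the $cu$-inflatable component $A$ to obtain, for $m$-a.e. $x \in A$, an essential unstable manifold of infinite span. Under plaque unique integrability of $E^{cu}$ this essential unstable manifold coincides, modulo a leaf-null set, with the intrinsic $\delta$-ball in the leaf $\mathcal{F}^{cu}(x)$ through $x$, as recorded in the paragraph following Definition \ref{plaque unique}. Letting $\delta$ tend to infinity, I deduce the existence of a subset $A_0 \subset A$ with $m(A_0)=m(A)$ such that, for every $y \in A_0$, the entire leaf $\mathcal{F}^{cu}(y)$ is contained in $A$ modulo a leaf-null set; in other words, $A$ is, up to an $m$-null set, $\mathcal{F}^{cu}$-saturated.

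Next, the classical Hopf observation that Birkhoff averages of continuous functions are constant along stable sets, combined with Pesin's absolute continuity of the stable foliation, gives for $m$-a.e. $x \in A$ that $\Wls(x) \cap A_0$ has full $m_{\Wls(x)}$-measure in $\Wls(x)$. Restrict to such an $x$ that is also a Lebesgue density point of $A_0$ and lies in a Pesin block of uniform size. Choose a small product neighborhood $U_x$ of $x$ foliated by the $\mathcal{F}^{cu}$-plaques through the points of $\Wls(x) \cap U_x$; plaque unique integrability makes this chart well defined. Applying Fubini for $\mathcal{F}^{cu}$ in this chart, $m|_{U_x}$ disintegrates as $\int m_{P^{cu}(t)} \, d\mu(t)$ with $\mu$ equivalent to $m_{\Wls(x)}$. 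For $\mu$-a.e. $t$ one has $t \in A_0$, hence $m_{P^{cu}(t)}(U_x \setminus A) = 0$; integrating yields $m(U_x \setminus A) = 0$, closing the argument.

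The main obstacle is precisely this Fubini step, because a plaque uniquely integrable foliation tangent to $E^{cu}$ is a priori only $C^0$ with $C^1$ leaves, and such foliations need not be absolutely continuous. The workaround I have in mind exploits that plaque unique integrability forces every Pesin local unstable disk $\Wlu(y)$ to be contained in the leaf $\mathcal{F}^{cu}(y)$; hence on a Pesin block of positive measure the Pesin unstable lamination is a sublamination of $\mathcal{F}^{cu}$. Pesin's absolute continuity theorem then yields a local Fubini statement on Pesin blocks, and continuity of the plaques afforded by plaque unique integrability extends it to the foliated chart on $U_x$. Making this transfer rigorous is the technical heart of the proof.
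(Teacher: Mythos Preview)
Your overall architecture---build a foliated product neighborhood $U_x$ over $\Wls(x)$ using the $\mathcal{F}^{cu}$-plaques, then show $m(U_x\setminus A)=0$---is reasonable, and the first two paragraphs are essentially correct. The gap is exactly where you flag it: the Fubini step. You need that a set which meets $m_{\Wls(x)}$-almost every $\mathcal{F}^{cu}$-plaque in zero leaf measure has zero $m$-measure; equivalently, that the projection $\pi:U_x\to\Wls(x)$ along $\mathcal{F}^{cu}$ pushes $m$ to something absolutely continuous with respect to $m_{\Wls(x)}$. Plaque unique integrability gives a $C^0$ foliation with $C^1$ leaves, and such foliations can fail to be absolutely continuous, so this step is not free. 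Your workaround---that the Pesin unstable lamination on a block is a sublamination of $\mathcal{F}^{cu}$ and is absolutely continuous---is correct as far as it goes, but the sentence ``continuity of the plaques \ldots\ extends it to the foliated chart'' hides the entire difficulty. Pesin blocks are laminated, not foliated; the Pesin unstable disks through points of $U_x$ need not reach the transversal $\Wls(x)$; and absolute continuity of a lamination on a Cantor-like set does not pass to the ambient foliation by any continuity argument. You have not said how to handle the (a priori positive-measure) set of plaques whose base point in $\Wls(x)$ is not in a given Pesin block.

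The paper avoids this problem entirely by arguing by contradiction and shifting the absolute-continuity burden to the \emph{stable} side. Assume $U_p\setminus A$ has positive measure; then some other ergodic component $B$ has positive measure in $U_p$. Pick $q$ so that $m_{\Wlu(q)}$-a.e.\ point has a Pesin local stable manifold and is generic for $m_B$; let $V_q$ be the union of these stable manifolds. Pesin's absolute continuity of the \emph{stable} holonomy---which is available with no integrability hypothesis on $E^{cu}$---forces $m_{F_x(\delta)}(V_q)>0$ for all $x$ in an open subset of $\Wls(p)$. But by plaque unique integrability $F_x(\delta)$ coincides with the essential unstable manifold $\Gamma_x$, and $m_{\Gamma_x}(A)=\vert m_{\Gamma_x}\vert$ for $m_{\Wls(p)}$-a.e.\ $x$, giving the contradiction. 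The point is that one never needs a Fubini decomposition along $\mathcal{F}^{cu}$; one only needs that each individual plaque $F_x(\delta)$ equals $\Gamma_x$ and carries full leaf measure in $A$, and then the complementary component's stable lamination does the transverse work.
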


\begin{proof}
Let $A$ be a $cu$-inflatable ergodic component of some $f\in \NA$ for which $E^{cu}$ is plaque uniquely integrable. Fix $\delta>0$ and pick any $p \in M$ such that $m_{\Wlu(p)}$-almost every point $x \in \Wls(p)$ has an essential unstable manifold $\Gamma_x$ of span $\delta$ and such that $m_{\Wlu(p)}$-almost every $x \in \Wlu(p)$ is backward generic for $m_A$, in the sense that 
\begin{equation*}
 \varphi_-(x) := \lim_{n \rightarrow \infty} \frac{1}{n} \sum_{k=0}^{n-1} f^{-k}(x) = \int \varphi \ dm_A
\end{equation*}
for every continuous $\varphi: M \rightarrow \mathbb{R}$. (See Lemma \ref{disintegration} and Corollary \ref{full measure in component} for justification of why such $p$ exists.)

By plaque unique integrability of $E^{cu}$, to every $x\in \Wls(p)$ there is a unique immersed submanifold $F_x$ of $M$, passing through $x$, and tangent to $E^{cu}$. Write $F_x(\delta)$ for the ball, in $F_x$,  of radius $\delta$ around $x$ and $U_p = \bigcup_{x \in \Wls(p)} F_x(\delta)$. Then $U_p$ is an open set and we claim that $A$ has full  Lebesgue measure in $U_p$. Suppose that is not the case. Then there is some other ergodic component $B$ with positive Lebesgue measure in $U_p$. Pick $q \in M$ admitting a local unstable manifold such that $m_{\Wlu(q)}$-almost every $y \in \Wlu(q)$ has a local stable manifold $\Wls(y)$ on which $m_{\Wls(y)}$-almost every point is backward generic for $m_B$. Let $V_q$ be the union of local stable manifolds of such points. Now, by absolute continuity of Pesin's local stable manifolds, there is an open set $V\subset \Wls(p)$ such that $m_{F_x(\delta)} (V_q)>0$ for every $x \in V$. But for $m_{\Wls(p)}$-almost every $x$, $m_{\Gamma_x}(A) = \vert m_{\Gamma_x} \vert$ so that $m_{\Gamma_x}(V_p) \leq m_{\Gamma_x}(B) = 0$. Since $F_x(\delta)$ coincides with $\Gamma_x$ this contradicts the existence of $B$.
\end{proof}

\begin{cor}
 Let $f$ be a bi-inflatable non-uniformly Anosov diffeomorphism for which both the central-stable and the central-unstable bundles are plaque uniquely integrable. Then $f$ is stably ergodic.
\end{cor}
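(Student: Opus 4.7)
My plan is to combine Corollary \ref{at least one}, Proposition \ref{thick}, Proposition \ref{bi-inflatable implies ergodicity}, and Proposition \ref{EBINA is open}. Since $f$ is bi-inflatable, Corollary \ref{at least one} and its $cs$-counterpart give at least one $cu$-inflatable ergodic component $A_{cu}$ and at least one $cs$-inflatable ergodic component $A_{cs}$. Proposition \ref{thick}, under plaque unique integrability of $E^{cu}$, shows that $A_{cu}$ is open modulo a Lebesgue zero set; its dual, under plaque unique integrability of $E^{cs}$, does the same for $A_{cs}$. By Theorem \ref{main} and its stable analogue, $m$-a.e.\ point in $A_{cu}$ (resp.\ $A_{cs}$) has an essential unstable (resp.\ stable) manifold of arbitrarily large span.

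The decisive step is to show $A_{cu} = A_{cs}$ modulo zero. First I would argue that each of the two components is saturated, modulo zero, by both the $cs$- and $cu$-foliations arising from plaque unique integrability. At $m$-a.e.\ hyperbolic $x \in A_{cu}$, the Pesin local stable manifold $\Wls(x)$ lies inside the unique $cs$-leaf through $x$; by absolute continuity of Pesin's stable holonomy and the constancy of forward Birkhoff averages along $\Wls(x)$, the entire $cs$-leaf through $x$ belongs to $A_{cu}$ modulo zero on the leaf, and the symmetric argument saturates $A_{cu}$ by $cu$-leaves. The same holds for $A_{cs}$. The two transverse plaque uniquely integrable foliations then endow a neighbourhood of any hyperbolic point with a local product structure. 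A sliding argument along $cs$- and $cu$-leaves, exploiting this bisaturation together with the openness of $A_{cs}$ and $A_{cu}$, forces $A_{cu}\cap A_{cs}$ to have positive Lebesgue measure, and ergodicity of both components then yields $A_{cu}=A_{cs}$ modulo zero.

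Setting $A:=A_{cu}=A_{cs}$, this common ergodic component is bi-inflatable, so Theorem \ref{main} and its $cs$-counterpart produce, at $m$-a.e.\ $x\in A$, essential stable and essential unstable manifolds of span at least $\delta$ for every $\delta>0$. Proposition \ref{bi-inflatable implies ergodicity} then gives $m(A)=1$, so $f$ is ergodic, and Proposition \ref{EBINA is open} upgrades this conclusion to stable ergodicity (noting that plaque unique integrability and bi-inflatability of both bundles persist under small $C^1$-perturbations, the first by the standard plaque-family argument and the second by Remark \ref{opencondition}). The main obstacle I anticipate is the intersection step in the middle paragraph: establishing that two open saturated ergodic components must meet in positive measure is the geometric input that replaces the transitivity hypothesis used in Section \ref{transitivity}, and it relies on an absolute-continuity-type property of the plaque uniquely integrable foliations that is not automatic from the definition and may require an auxiliary argument using the dominated splitting.
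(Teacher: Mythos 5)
Your outer structure (two inflatable components, each open modulo zero by Proposition \ref{thick} and its dual, then coincidence of the components, then Proposition \ref{bi-inflatable implies ergodicity} and Proposition \ref{EBINA is open}) matches the paper's proof exactly. The difference is the decisive middle step. The paper's proof simply says ``transitivity of $f$ thus implies that $m(A_{cs}\cap A_{cu})>0$'' --- that is, it uses transitivity of $f$, a hypothesis that is not written in the statement of the corollary but is clearly operative throughout this subsection (which is titled ``Stable ergodicity through transitivity'' and whose preceding discussion is framed entirely around transitive $f$). Once the two components are open modulo zero, transitivity forces two invariant open sets to meet, and ergodicity of each component then gives $A_{cs}=A_{cu}$ modulo zero. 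You correctly identified that this is the crux, but your attempt to get the intersection without transitivity does not work.

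Concretely, the gap is in your bisaturation claim. You assert that $A_{cu}$ is saturated, modulo zero, by full $cs$-leaves of the plaque uniquely integrable foliation, via ``constancy of forward Birkhoff averages along $\Wls(x)$.'' Forward Birkhoff averages are constant on the \emph{stable set} $W^s(x)$, and Pesin's local stable manifold $\Wls(x)$ is contained in it, but the full $cs$-leaf through $x$ is not: in the non-uniformly Anosov setting $E^{cs}$ is not uniformly contracted, so a leaf tangent to $E^{cs}$ need not be a stable set, and two points on the same $cs$-leaf lying outside each other's (non-uniformly sized) Pesin manifolds can have different forward averages. Without bisaturation, the subsequent ``sliding argument'' along alternating $cs$- and $cu$-leaves is an accessibility-type property that does not follow from plaque unique integrability of the two bundles, and two disjoint open invariant sets of positive measure can perfectly well coexist in a connected manifold. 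So the intersection step is not established; you need the transitivity hypothesis (as the paper's proof does), after which the rest of your argument goes through.
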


\begin{proof}
Any diffeomorphism $f$ satisfying the hypothesis of the corollary has at least one $cs$-inflatable ergodic component $A_{cs}$ and at least one $cu$-inflatable ergodic component $A_{cu}$. By Proposition \ref{thick}, both of them are open, modulo a set of zero Lebesgue measure. Transitivity of $f$ thus implies that $m(A_{cs}\cap A_{cu})>0$. But since $f_{\vert A_{cs}}$ and $f_{\vert A_{cu}}$ are ergodic, we must have $A_{cs} = A_{cu}$ up to a set of zero Lebesgue measure. Hence, by Proposition \ref{bi-inflatable implies ergodicity} and Proposition \ref{EBINA is open}, $f$ is stably ergodic.
\end{proof}

It turns out that it suffices to assume plaque unique integrability of one of the subbundles in order to obtain ergodicity and stable ergodicity. As usual we give only one of the statements, the other being completely analogous.

\begin{thm}\label{staberg through transitivity}
 Let $f$ be a transitive, $cu$-inflatable, non-uniformly Anosov diffeomorphism for which the  central-unstable 
bundle is plaque uniquely integrable. Then $f$ is ergodic. If in addition $f$ is $cs$-inflatable, then $f$ is stably ergodic.
\end{thm}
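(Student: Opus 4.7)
The plan is to first prove ergodicity of $f$, as stable ergodicity then follows automatically under the additional $cs$-inflatability hypothesis: $f$ then lies in $\mathcal{E} \cap \BI \cap \NA$, which by Proposition \ref{EBINA is open} is $C^1$-open, so $f$ is stably ergodic.

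To obtain ergodicity I first produce a canonical dense open ``good'' component. Corollary \ref{at least one} supplies a $cu$-inflatable ergodic component $A$, and Proposition \ref{thick} makes $A$ open modulo a Lebesgue null set. Replacing an open representative $U_0$ by $U := \bigcup_{n \in \mathbb{Z}} f^n(U_0)$ gives an $f$-invariant open set with $m(U \triangle A)=0$. Transitivity forces $U$ to be dense, since $M \setminus U$ is closed and $f$-invariant, and any nonempty open $V$ contained in it would satisfy $f^n(V) \subset M \setminus U$ for all $n$, contradicting transitivity against $U$. The same mechanism shows that $A$ is the unique $cu$-inflatable ergodic component: two disjoint such would give disjoint $f$-invariant open sets $U_1, U_2$ (disjoint as open sets because $m(U_1 \cap U_2) = m(A_1 \cap A_2) = 0$ and nonempty open sets have positive measure), and $f$-invariance of $U_1$ yields $f^n(U_1) \cap U_2 = U_1 \cap U_2 = \emptyset$ for every $n$, again contradicting transitivity.

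The core of the argument is to show $m(A) = 1$. By Theorem \ref{main} together with plaque unique integrability of $E^{cu}$, $m$-almost every $x \in A$ satisfies $m_{\Lambda(x)}(\Lambda(x) \setminus A) = 0$, where $\Lambda(x)$ is the leaf through $x$ of the ($f$-invariant) $cu$-foliation $\mathcal{F}^{cu}$; in other words, $A$ is saturated modulo null sets by $\mathcal{F}^{cu}$-leaves. Assume for contradiction that some other ergodic component $B$ has $m(B) > 0$. Choose $y_0 \in B$ simultaneously Birkhoff-generic for $B$ and Pesin-regular, so that $\Wls(y_0)$ has a uniform lower bound $r_1 > 0$ on its size within the relevant Pesin block. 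Density of $U$ places $y_0$ in $\overline{U}$, and a small $\mathcal{F}^{cu}$-flow box around $y_0$ contains $\mathcal{F}^{cu}$-leaves lying in $A$ with positive transverse weight. Since $\Wls(y_0)$ is transverse to $\mathcal{F}^{cu}$, Pesin's absolute continuity of the $cu$-holonomy, combined with the leafwise saturation of $A$, implies $m_{\Wls(y_0)}(\Wls(y_0) \cap A) > 0$. Selecting $y \in \Wls(y_0) \cap A$ Birkhoff-generic for $A$, stable equivalence yields $\varphi_+(y) = \varphi_+(y_0)$ for every continuous $\varphi$, hence $\int \varphi\,dm_A = \int \varphi\,dm_B$. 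As this holds for all continuous $\varphi$, $m_A = m_B$, contradicting $m(A \cap B) = 0$. Therefore $m(A) = 1$ and $f$ is ergodic.

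The main obstacle will be making precise the step $m_{\Wls(y_0)}(\Wls(y_0) \cap A) > 0$. This requires combining the leafwise $\mathcal{F}^{cu}$-saturation of $A$ with Pesin's absolute continuity of the $cu$-holonomy, while exploiting the uniform-size estimate on $\Wls(y_0)$ inside a Pesin block and the density of $U$ at the scale of that block to guarantee that $\Wls(y_0)$ genuinely crosses enough leaves lying in $A$. Once this transversal intersection is secured, the matching of Birkhoff averages along $\Wls(y_0)$ is routine.
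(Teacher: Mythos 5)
Your overall architecture matches the paper's: Theorem \ref{main} plus Proposition \ref{thick} give an open-mod-null $cu$-inflatable component $A$, transitivity gives density, a second component $B$ is ruled out by an absolute continuity argument, and stable ergodicity under the extra $cs$-inflatability follows from Proposition \ref{EBINA is open}. However, the step you yourself flag as ``the main obstacle'' --- deducing $m_{\Wls(y_0)}(\Wls(y_0)\cap A)>0$ from the leafwise saturation of $A$ --- is the entire content of the proof, and the tool you invoke for it does not exist in the paper. Pesin's absolute continuity theorem, as stated, concerns the holonomy of a measurable family of local unstable manifolds over a hyperbolic block between two transversals; it says nothing about the integrated continuous foliation $\mathcal{F}^{cu}$. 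The transverse absolute continuity of $\mathcal{F}^{cu}$ (a Fubini statement: positive Lebesgue measure of an essentially leaf-saturated set forces positive measure of its trace on a transversal such as $\Wls(y_0)$) is exactly the kind of property that can fail for continuous invariant foliations tangent to non-uniformly expanding bundles, and nothing in the paper establishes it. Even granting it, two further repairs are needed: a point of $\Wls(y_0)$ lying on a leaf that is \emph{almost} entirely in $A$ need not itself lie in $A$, so you must first replace $A$ by its set of backward-generic points, which is genuinely saturated by entire global unstable manifolds; and your selected $y\in\Wls(y_0)\cap A$ must be \emph{forward} generic for $m_A$, which is not automatic for a point of a Lebesgue-null transversal and must be built in from the start (as in Corollary \ref{full measure in component}).

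The paper avoids all of this by running the holonomy in the opposite, \emph{stable}, direction, where Pesin's theorem applies directly. It picks $p$ with $m_{\Wlu(p)}$-a.e.\ point forward generic for $m_B$ and possessing a local stable manifold, sets $V_p=\bigcup\Wls(\cdot)$ over these points, and uses density of $A$ to find $q_n\to p$ whose essential unstable manifolds $\Gamma_{q_n}$ have uniform span $\delta$ and satisfy $m_{\Gamma_{q_n}}(A)=\vert m_{\Gamma_{q_n}}\vert$ with $m_{\Gamma_{q_n}}$-a.e.\ point forward generic for $m_A$. Absolute continuity of the stable holonomy from $\Wlu(p)$ to $\Gamma_{q_n}$ then gives $m_{\Gamma_{q_n}}(V_p)>0$, i.e.\ a positive-measure set of forward-$B$-generic points inside a set of full measure of forward-$A$-generic points --- an immediate contradiction. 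If you want to keep your direction of argument, you would have to prove an absolute continuity statement for $\mathcal{F}^{cu}$ (or for the lamination by essential unstable manifolds of span $\delta$) that the paper deliberately does not claim; as written, the proposal has a genuine gap at its central step.
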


\begin{proof}
Let $f \in \NA $ be transitive, $cu$-inflatable, with plaque uniquely integrable central-unstable bundle. Then, by Theorem \ref{main} and Proposition \ref{thick}, there exists an ergodic component $A$, open up to a set of zero Lebesgue measure, on which Lebesgue almost every point admits an essential unstable manifold of infinite span. Suppose that $m(A)<1$, so that $f$ has some other ergodic component, $B$ say. Pick $p \in M$ admitting a local unstable manifold on which $m_{\Wlu(p)}$-almost every point has a local stable manifold and is forward generic for $m_B$. That is, 
\begin{equation*}
 \varphi_+ (x) := \lim_{n \rightarrow \infty} \frac{1}{n} \sum_{k=0}^{n-1} f^k (x) = \int \varphi \ dm_B
\end{equation*}
for every continuous $\varphi: M \rightarrow \mathbb{R}$ and $m_{\Wlu(p)}$-almost every $x \in \Wlu(p)$. Let $V_p$ denote the union of the local stable manifolds of such points. Since $A$ is open and dense, there exists a sequence of points $q_n \rightarrow p$ such that every $q_n$ has an essential unstable manifold $\Gamma_{q_n}$ of span $\delta$ for some fixed $\delta>0$ and such that $m_{\Gamma_{q_n}}(A) = \vert m_{\Gamma_{q_n}} \vert$. But it follows from absolute continuity of the stable foliation that, for all $n$ sufficiently large, we must have $m_{\Gamma_{q_n}}(B) \geq m_{\Gamma_{q_n}}(V_p)>0$, contradicting the existence of $B$.

If $f$ is $cs$-inflatable, it follows directly from Theorem \ref{main} and Proposition \ref{EBINA is open} that $f$ is stably ergodic.
\end{proof}

We do not know whether the plaque unique integrability assumption is necessary. It would be superfluous if the unstable essential manifolds vary continuously. In particular, it would be superfluous if it turns out that the invariant subbundles of a non-uniformly Anosov diffeomorphism are always plaque uniquely integrable. 

\begin{prob}
 Let $f$ be a non-uniformly Anosov diffeomorhpism. What can we say about the integrability of $E^{cs}$ and $E^{cu}$?
\end{prob}

\subsection{Stable ergodicity through Partial Hyperbolicity}

We say that $f \in \mathcal{DS}$ is partially hyperbolic if either $E^{cs}$ is uniformly contracting, i.e. there exists $C>0$, $0<\tau<1$ such that 
\begin{equation*}
 \| Df_{\vert E_x^{cs}}^n \| \leq C \tau^n \quad \forall x \in M, \ n \geq 0,
\end{equation*}
or if $E^{cu}$ is uniformly expanding, i.e. there exists $C>0$, $0<\tau<1$ such that
\begin{equation*}
 \| Df_{\vert E_x^{cu}}^{-n} \| \leq C \tau^n \quad \forall x\in M, \ n \geq 0.
\end{equation*}

The two notions are interchanged upon replacing a diffeomorphism by its inverse, and we shall therefore content ourselves by consider the former. In this case we write $TM = E^s \oplus E^{cu}$ for the partially hyperbolic splitting.

It is well-known (see \cite{MR0501173}) that if $f \in \mathcal{DS}$ has a partially hyperbolic splitting $TM = E^s \oplus E^{cu}$, then $E^s$ is plaque uniquely integrable and every $x \in M$ has a local stable manifold of uniform span. The existence of a $cu$-inflatable ergodic component is therefore enough to obtain ergodicity. From Corollary \ref{at least one}, together with the first and second auxiliary results (Proposition \ref{bi-inflatable implies ergodicity} and Proposition \ref{EBINA is open}) we get:

\begin{thm} \label{PH inflatable implies staberg}
 Suppose that $f \in \NA$ is $cu$-inflatable and admits a partially hyperbolic splitting $E^s \oplus E^{cu}$. Then $f$ is stably ergodic.
\end{thm}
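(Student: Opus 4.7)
The plan is to combine Corollary~\ref{at least one} (unstable side), the classical uniform stable manifold theorem supplied by partial hyperbolicity (stable side), Proposition~\ref{bi-inflatable implies ergodicity} (to close ergodicity), and Proposition~\ref{EBINA is open} (to promote ergodicity to stable ergodicity), exactly as the paragraph preceding the statement advertises.

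First I would apply Corollary~\ref{at least one} to $f$ to locate a $cu$-inflatable ergodic component $A \subset M$ on which $m$-almost every point carries an essential unstable manifold of infinite span, in particular of span $\delta$ for any chosen $\delta > 0$. On the stable side, because $E^s$ is uniformly contracting with rate $\tau < 1$, the classical (uniform) stable manifold theorem cited just before the statement furnishes a constant $\delta_0 > 0$ such that every $x \in M$ admits a $C^2$ local stable manifold of span $\delta_0$ tangent to $E^s$. These are genuine (full), and hence in particular essential, stable manifolds of span $\delta_0$.

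Taking $\delta := \delta_0$, I would then verify the remaining hypotheses of Proposition~\ref{bi-inflatable implies ergodicity} on $A$: essential stable and essential unstable manifolds of span $\delta$ at $m$-almost every point of $A$ (from the two steps above), $\lambda_+^{cs}(x) \le \log \tau < 0$ for every $x$ (from uniform contraction of $E^s$), and $\lambda_-^{cu}(x) > 0$ at $m$-almost every $x \in A$ (from $f \in \NA$). The proposition then gives $m(A) = 1$, so $f$ is ergodic.

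For stable ergodicity I would invoke Proposition~\ref{EBINA is open}. Since $f \in \NA$ by hypothesis and $f \in \mathcal{E}$ by the previous paragraph, only $f \in \BI$ --- that is, $cs$-inflatability --- remains to be checked. Here is where I expect the main obstacle: uniform contraction of $E^s$ controls the largest singular value of $Df|_{E_x^s}^n$ from above, but gives no \emph{a priori} lower bound on its smallest singular value, so the integrand $\log |\det Df|_{E_x^s}^{-n}|$ appearing in the $cs$-inflatability inequality is not immediately controlled in the required way. I would address this by passing to a sufficiently high iterate and, if necessary, replacing the Riemannian metric by one adapted to the dominated splitting, so that the gap between the uniform contraction rate along $E^s$ and the worst expansion rate of $(Df|_{E^s})^{-1}$ opens up enough to satisfy the inequality. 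Should the direct verification of $\BI$ prove recalcitrant, the alternative is to bypass Proposition~\ref{EBINA is open} and simply re-run the ergodicity argument of the previous paragraphs for every $g$ in a $C^1$-neighborhood of $f$, using that partial hyperbolicity with uniform constants and $cu$-inflatability (Remark~\ref{opencondition}) are both $C^1$-open.
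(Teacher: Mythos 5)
Your derivation of ergodicity is exactly the paper's: Corollary~\ref{at least one} supplies a $cu$-inflatable ergodic component $A$ with essential unstable manifolds of infinite span, partial hyperbolicity supplies uniform (full, hence essential) stable manifolds everywhere, and Proposition~\ref{bi-inflatable implies ergodicity} closes the argument. The paper gives no further detail for the stable-ergodicity half beyond citing Proposition~\ref{EBINA is open}, and your instinct about the obstacle there is right on both counts. First, you cannot in general verify $cs$-inflatability from uniform contraction of $E^s$: the right-hand side of the $cs$-inflatability inequality involves $\sup_x$ of a Birkhoff-type quantity while the left-hand side is its spatial average, and the gap between a supremum and an average of $\frac{1}{n}\xi_n^{cs}$ can grow linearly in $n$, so no adapted metric or high iterate will force the inequality; $cs$-inflatability is a genuine extra condition even for uniformly contracting bundles, and it is simply not a hypothesis of the theorem. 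So the correct route is your fallback: do not apply Proposition~\ref{EBINA is open} as a black box, but mirror its proof for nearby $g$. The one point to make explicit there is that a $C^1$-perturbation $g$ need not lie in $\NA$, so you cannot invoke Corollary~\ref{at least one} for $g$ verbatim; instead, as in the proof of Proposition~\ref{EBINA is open}, you use Proposition~\ref{semicont} to get $\lambda_-^{cu}(g,\cdot)>0$ on a set of measure greater than $\tfrac{1}{2}$ (the condition $\lambda_+^{cs}(g,\cdot)<0$ being free from the persistence of partial hyperbolicity), and the strengthened form of $cu$-inflatability restricted to sets of measure greater than $\tfrac{1}{2}$ to produce a hyperbolic, $cu$-inflatable ergodic component of $g$; Theorem~\ref{main}, the uniform stable manifolds of $g$, and Proposition~\ref{bi-inflatable implies ergodicity} then give ergodicity of $g$. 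With that substitution your proof is complete and is, in substance, the argument the paper intends.
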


\subsection{Stable ergodicity through global product structure}

Another way to guarantee the existence of a bi-inflatable ergodic component for a diffeomorphism $f \in \NA$ is by requiring global product structure. It is a useful condition when constructing examples, because it is checkable for derived-from-Anosov diffeomorphisms. 

\begin{defn}
 We say that $f \in \mathcal{DS}$ has global product structure if there exists $K>0$ such that, given any $cs$-dimensional disk $D_1$ of tangent to $E^{cs}$ of span $K$, and any $cu$-dimensional disk $D_2$ tangent to $E^{cu}$ of span $K$, we have $D_1 \cap D_2 \neq \emptyset$.
\end{defn}

\begin{thm}
 Let $f$ be a bi-inflatable non-uniformly Anosov diffeomorphism with global product structure. Suppose, moreover, that either $\dim E^{cs}$ or  $\dim E^{cu}$ (or both) is at most bi-dimensional. Then $f$ is stably ergodic.
\end{thm}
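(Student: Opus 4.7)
The plan is to combine bi-inflatability with global product structure to force the $cs$- and $cu$-inflatable ergodic components to coincide, and then apply the two auxiliary results. By Corollary \ref{at least one} and its $cs$-analogue, $f$ admits a $cu$-inflatable ergodic component $A_{cu}$ and a $cs$-inflatable ergodic component $A_{cs}$. Replacing $f$ by $f^{-1}$ if necessary, I may assume $\dim E^{cu} \leq 2$. Theorem \ref{main} then yields full unstable disks $D^u_x = B^{\Wsu(x)}_K(x)$ of span $K$ for $m$-almost every $x \in A_{cu}$, where $K>0$ is the constant supplied by global product structure. The $cs$-analogue of Theorem \ref{main} applied to $A_{cs}$ provides, at $m$-almost every $y \in A_{cs}$, an essential stable manifold $\Gamma^s_y$ of span $K$, contained in a $cs$-disk $N_y$ of span $K$ tangent to $E^{cs}$, with $m_{N_y}(\Gamma^s_y \cap B^{N_y}_K(y)) = m_{N_y}(B^{N_y}_K(y))$.

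The heart of the proof is to show $A_{cs} = A_{cu}$ modulo a null set. Fix a generic $y \in A_{cs}$. For every generic $x \in A_{cu}$, global product structure forces $D^u_x \cap N_y \neq \emptyset$, yielding an intersection point $w(x) \in N_y$. Now let $x$ vary over a Pesin block of positive measure inside $A_{cu}$. The assignment $x \mapsto w(x)$ is constant along global unstable leaves, and, restricted to a $cs$-dimensional transversal to the Pesin $W^u$-foliation, coincides with an unstable holonomy from the transversal to $N_y$. By absolute continuity of the Pesin $W^u$-foliation, this holonomy sends sets of positive $cs$-dimensional measure to sets of positive $m_{N_y}$-measure. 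Since $\Gamma^s_y$ has full $m_{N_y}$-measure in $B^{N_y}_K(y)$, there is a positive $m_{N_y}$-measure set of intersection points $w \in \Gamma^s_y \cap D^u_{x(w)}$ with $x(w)$ generic in $A_{cu}$. Using absolute continuity a second time, I can choose such a $w$ that is also a Birkhoff $m$-generic point. Then $w \in \Gamma^s_y$ is forward asymptotic to $y$, so $\varphi_+(w) = \int \varphi \, dm_{A_{cs}}$ for every continuous $\varphi$; and $w \in D^u_{x(w)}$ is backward asymptotic to $x(w)$, so $\varphi_-(w) = \int \varphi \, dm_{A_{cu}}$. Being $m$-generic, $\varphi_+(w) = \varphi_-(w)$, and varying $\varphi$ forces $m_{A_{cs}} = m_{A_{cu}}$; hence $A_{cs} = A_{cu}$ modulo a null set.

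Writing $A := A_{cs} = A_{cu}$, $m$-almost every point of $A$ admits both an essential stable and an essential unstable manifold of span $K$. Proposition \ref{bi-inflatable implies ergodicity} then gives $m(A)=1$, so $f$ is ergodic. Since being bi-inflatable and non-uniformly Anosov is $C^1$-open (Remark \ref{opencondition}), Proposition \ref{EBINA is open} upgrades ergodicity to stable ergodicity.

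The main obstacle is the measure-theoretic step in the middle paragraph: justifying that $x \mapsto w(x)$ sends a positive Lebesgue piece of $A_{cu}$ onto a positive $m_{N_y}$-measure subset of $\Gamma^s_y$ which contains an $m$-generic point. This requires a careful invocation of absolute continuity of the Pesin unstable foliation, together with continuity, in a suitable sense, of the full disks $D^u_x$ over a Pesin block. The dimensional hypothesis $\dim E^{cu} \leq 2$ enters exactly here: it is what allows Theorem \ref{main} to deliver \emph{full} rather than merely essential unstable disks, so that each intersection $D^u_x \cap N_y$ is a single well-defined point and the holonomy is unambiguous. Without this, the intersection could be perforated, and the Fubini-type argument sweeping $w(x)$ across a positive-measure subset of $\Gamma^s_y$ would break down.
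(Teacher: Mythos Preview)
Your overall architecture matches the paper's: find a $cs$-inflatable component $A_{cs}$ and a $cu$-inflatable component $A_{cu}$, use the dimensional hypothesis to upgrade one family of essential manifolds to \emph{full} manifolds, invoke global product structure to force intersections, and then use absolute continuity to conclude $A_{cs}=A_{cu}$, after which Propositions \ref{bi-inflatable implies ergodicity} and \ref{EBINA is open} finish the job. Up to the symmetric choice of which bundle is low-dimensional, this is exactly the paper's plan.

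Where your argument diverges from the paper, and where the gap lies, is the holonomy step. You propose to vary $x$ over a Pesin block in $A_{cu}$ and treat $x\mapsto w(x)\in N_y$ as an ``unstable holonomy'' along the \emph{global} disks $D^u_x$ of span $K$. But Pesin's absolute continuity theorem applies to families of \emph{local} invariant manifolds, and the paper's Remark \ref{no continuity} explicitly warns that no continuity of the global disks is claimed. So the mechanism you invoke --- ``continuity, in a suitable sense, of the full disks $D^u_x$ over a Pesin block'' --- is precisely what is \emph{not} available. The paper avoids this entirely: it fixes a single local unstable manifold $\Wlu(p)$ inside $A_{cs}$ as transversal, writes each full stable manifold as $\Gamma_x^s=\bigcup_{n\ge 0} f^{-n}(\Wls(f^n(x)))$, and observes that since every $\Gamma_x^s$ meets the essential unstable disk $\Gamma_q^u$, there is a \emph{fixed} $n$ for which
\[
\Lambda_n=\{x\in\Lambda: f^{-n}(\Wls(f^n(x)))\cap \Gamma_q^u\neq\emptyset\}
\]
has positive $m_{\Wlu(p)}$-measure. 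For this fixed $n$, the family $\{f^{-n}(\Wls(f^n(x)))\}_{x\in\Lambda_n}$ consists of genuine Pesin local stable manifolds, and Pesin's absolute continuity applies directly to give positive $m_{\Gamma_q^u}$-measure to the union. This ``pass to a fixed iterate'' trick is the missing idea in your sketch.

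A second, smaller point: your detour through an $m$-generic $w$ to equate $\varphi_+(w)$ with $\varphi_-(w)$ is unnecessary and itself requires justification (why should a full-$m$-measure set meet the positive-$m_{N_y}$-measure set of good intersections?). The paper sidesteps this by arranging that the intersection points are simultaneously \emph{forward} generic for $m_{A_{cs}}$ (lying on stable manifolds of $\Lambda$) and \emph{forward} generic for $m_{A_{cu}}$ (being $m_{\Gamma_q^u}$-typical in $A_{cu}$); equality of the two forward averages then gives $m_{A_{cs}}=m_{A_{cu}}$ without ever comparing forward and backward time.
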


\begin{proof}
 Let $f$ be as in the theorem, and let $A_{cs}$ and $A_{cu}$ be $cs$- and $cu$-inflatable components of $f$. Our aim is to show that $A_{cs} = A_{cu}$ up to a set of zero Lebesgue measure. Suppose, without loss of generality, that $\dim E^{cs} \leq 2$.  Then there exists (see Lemma \ref{disintegration} and Corollary \ref{full measure in component} for justification) some $p \in M$, admitting a local unstable manifold $\Wlu(x)$ in which there is a full $m_{\Wlu(p)}$-measure set $\Lambda$ such that  every $x \in \Lambda$ is forward generic for $A_{cs}$ and has a full stable manifold $\Gamma_x^s$ of span $K$. Moreover, there exists $q$ admitting an essential unstable manifold $\Gamma_q^u$ of span $K$ and such that $m_{\Gamma_q^u}$-almost every $y \in \Gamma_q^u$ is forward generic for $A_{cu}$. By the local product structure, every $\Gamma_x^s, \ x \in \Lambda$ intersects $\Gamma_q^u$ in at least one point. In particular, there exists some $n\geq 0$ such that 
\begin{equation*}
 \Lambda_n := \{x \in \Lambda: f^{-n}(\Wls(f^n(x))) \cap \Gamma_q^u \neq \emptyset \}
\end{equation*}
has positive $m_{\Wlu(p)}$-measure. By absolute continuity of Pesin's local stable manifolds, this implies that 
\begin{equation*}
m_{\Gamma_q^u} \left( \bigcup_{x \in \Lambda_n}  f^{-n}(\Wls(f^n(x))) \right)>0,
\end{equation*}
 showing that $A_{cs}= A_{cu}$. Hence by the first and second auxiliary results (Proposition \ref{bi-inflatable implies ergodicity} and \ref{EBINA is open}), $f$ is stably ergodic.
\end{proof}

\begin{example}
In order to bring more life to the notion of global product structure, we describe a concrete situation in which it is present. Suppose that $f: \mathbb{T}^n \rightarrow \mathbb{T}^n$ has a dominated splitting 
$T \mathbb{T}^n = E^{cs}\oplus E^{cu}$ with the following property: there exist constant conefields $S^{cs}, S^{cu} \subset T \mathbb{T}^n$, containing $E^{cs}$ and $E^{cu}$ respectively, such that $S_p^{cs} \cap S_p^{cu} = \{0\}$ for some (hence all) $p \in \mathbb{T}^n$. Then clearly $f$ has a global product structure.
\end{example}

\section{Proof of the main technical result}

We dedicate the whole of this section to the proof of Theorem \ref{main}. We start by describing a way of decomposing Lebesgue measure into smooth measures along local stable and unstable manifolds, quite different from, and more suitable to our needs, than the well known disintegration of Rohlin into measures on elements of measurable partitions. The reason for doing this is that the families $\mathcal{W}^{\sigma} = \{W_{loc}(x)^{\sigma} : x \in H \}, \ \sigma =s, u, $ are not partitions (if $y \in \Wlu(x)$, then 
$\Wlu(x) \cap \Wlu(y) \neq \emptyset$ but $\Wlu(x) \neq \Wlu(y)$). A natural disintegration of $m$ into smooth measures on local unstable manifolds should therefore not yield a family of mutually singular measures.

\subsection{Non-singular disintegration}

Recall that $H$ denotes the set of hyperbolic points, i.e. those $x \in M$ for which (\ref{contraction}) and (\ref{expansion}) hold.
For $x \in H$ we write $m_x^u = m_{\Wlu(x)}$.  Let $\M(M)$ denote the space of positive finite Borel measures on $M$, endowed with the weak*-topology. Consider some $\nu \in \M(M)$ and a mapping $M \ni x \mapsto \nu_x \in \M(M)$ such that for every continuous $\varphi:M \rightarrow \mathbb{R}$, $ x \mapsto \int \varphi d\nu_x$ is measurable. We can then define the integral $\int \nu_x d\nu(x)$ to be the unique measure $\mu$, satisfying
\begin{equation}\label{integration of measures}
 \int \varphi d\mu = \int \left( \int \varphi d\nu_x \right) d\nu(x), \quad \forall \ \varphi \in C^0(M, \mathbb{R}).
\end{equation}

\begin{prop}\label{disintegration}
 There exists a family of measures $\{ \tilde{m}_x^u : x \in M \}$, with 
\begin{itemize}
 \item $\tilde{m}_x^u \sim m_x^u$ for $m$-almost every $x$,
\item $ \int \tilde{m}_x^u \ dm(x) = m$.
\end{itemize}
\end{prop}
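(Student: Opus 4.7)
The strategy is to realise $\{\tilde m_x^u\}$ as the Rohlin conditional measures of $m$ along a measurable partition subordinate to the unstable lamination, and then to invoke the absolute continuity of Pesin's holonomy to compare them with the leafwise Riemannian volumes $m_x^u$.

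First I would construct a measurable partition $\xi$ of (a full $m$-measure subset of) $M$ whose atoms $\xi(x)$ satisfy, for $m$-almost every $x\in H$: (i) $\xi(x)\subset \Wlu(x)$, and (ii) $\xi(x)$ contains an intrinsic neighbourhood of $x$ in $\Wlu(x)$. This is the classical construction of a partition subordinate to the unstable lamination (Ledrappier--Strelcyn): exhaust $H$ by a countable increasing family of compact Pesin blocks $\Lambda_k\subset H$ on which the local unstable manifolds have intrinsic size bounded below by $1/k$ and depend continuously on the base point, form the corresponding Pesin boxes $B_k$, and refine them by a countable collection of small $cs$-dimensional transversal discs, taking connected components of intersections with unstable plaques as atoms.

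Rohlin's disintegration theorem then supplies a measurable assignment $x\mapsto m_x^\xi$ of conditional probability measures, supported on the atoms $\xi(x)$, such that
\[
 m \;=\; \int m_x^\xi \, dm(x)
\]
in the sense of (\ref{integration of measures}). Absolute continuity of Pesin's holonomy inside each Pesin box produces a local product structure for $m|_{B_k}$, which forces $m_x^\xi$ to have a strictly positive Radon--Nikodym derivative with respect to $m_x^u$ restricted to $\xi(x)$. Setting $\tilde m_x^u := m_x^\xi$ yields both conclusions of the proposition: the second bullet is the displayed Rohlin formula, and the first is exactly the equivalence just established (both measures share the same measure class on $\xi(x)$, which is a positive $m_x^u$-neighbourhood of $x$ in $\Wlu(x)$).

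The principal obstacle is purely bookkeeping and has two facets: (a) arranging the partition $\xi$ so that each atom $\xi(x)$ contains a full $m_x^u$-positive neighbourhood of $x$, so that the equivalence $\tilde m_x^u \sim m_x^u$ is meaningful near $x$; and (b) verifying the measurability of $x\mapsto \tilde m_x^u$ required by (\ref{integration of measures}). Both are well understood consequences of the measurable dependence of Pesin's local manifolds on the base point and of the countable exhaustion by Pesin blocks, but they are genuinely the content of the proposition, the rest being essentially a repackaging of Rohlin's theorem.
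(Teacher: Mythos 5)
Your construction does not prove the proposition as stated, and the obstruction is not mere bookkeeping. Rohlin conditional measures $m_x^\xi$ are supported on the atoms $\xi(x)$, which are necessarily \emph{proper} subsets of $\Wlu(x)$ of positive but not full $m_x^u$-measure: the family $\{\Wlu(x)\}$ is not a partition (if $y\in\Wlu(x)$ then $\Wlu(y)\neq\Wlu(x)$ but the two overlap), so no measurable partition can have the full local unstable manifolds as atoms. Consequently $m_x^u$ is not absolutely continuous with respect to $m_x^\xi$ --- the set $\Wlu(x)\setminus\xi(x)$ has positive $m_x^u$-measure and zero $m_x^\xi$-measure --- and the first bullet $\tilde m_x^u\sim m_x^u$ fails; you only get equivalence of $m_x^\xi$ with the \emph{restriction} of $m_x^u$ to the atom. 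This weaker statement is not enough downstream: Corollary \ref{full measure almost everywhere} and Corollary \ref{full measure in component} use the equivalence to upgrade $\tilde m_x^u(S)=\vert\tilde m_x^u\vert$ to the assertion that $S$ has full $m_x^u$-measure in the \emph{entire} local unstable manifold, which your atoms cannot deliver. The paper is explicit that the whole point of the proposition is to produce a non-singular family (the $\tilde m_x^u$ of nearby points on the same leaf must overlap), precisely what Rohlin disintegration cannot do, since its conditionals are mutually singular across distinct atoms.

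The paper's actual argument is shorter and avoids partitions altogether: set $\mu:=\int m_x^u\,dm(x)$; absolute continuity of Pesin's unstable holonomies gives $\mu\sim m$, so Radon--Nikodym provides $\rho\in L^1(m)$, positive $m$-a.e., with $d\mu=\rho\,dm$; then define $\tilde m_x^u:=(\rho^{-1}\vert_{\Wlu(x)})\,m_x^u$. Equivalence with $m_x^u$ on all of $\Wlu(x)$ is immediate from positivity of $\rho$, and $\int\tilde m_x^u\,dm(x)=m$ follows by unwinding the definitions. If you want to salvage your route, you would have to abandon the partition and instead normalise the leafwise volumes by a globally defined density, which is exactly the paper's trick.
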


Clearly the same result is true upon replacing $u$ with $s$.

\begin{proof}
 Let $\mu := \int m_x^u dm(x)$. As a consequence of absolute continuity of Pesin's unstable holonomies, we have $\mu \sim m$. Hence, by the theorem of Radon Nikodym, there exists $\rho \in L^1(M, m)$, positive $m$-almost everywhere, such that
\begin{equation*}
 \mu(E) = \int_E \rho \ dm \ \text{ for every Borel set } E \subset M.
\end{equation*}

For $x\in H$, let $\phi_x : \Wlu(x) \rightarrow \mathbb{R}$ to be the restriction of $\rho^{-1}$ to $\Wlu(x)$ and take $\tilde{m}_x^u = \phi_x m_x^u$. Note that the family $\phi_x$ is well defined in the following sense: If $\rho' = \rho$ $m$-almost everywhere, and $ {\phi_x}' = {\rho'}^{-1}_{\vert \Wlu(x)}$, then ${\phi_x}' = \phi_x$ $m_x^u$-almost everywhere, for $m$-almost every $x$. Indeed, the formula
\begin{equation*}
\int m_x^u(\{\rho' \neq \rho \}) dm(x) = \mu(\{\rho' \neq \rho \}) = 0
\end{equation*}
implies that $m_x^u (\{\rho' \neq \rho \}) = 0$ $m$-almost everywhere.

Our choice of the family $\phi_x$ now immediately gives that
\begin{align}
m(E) & = \int \chi_E dm = \int \frac{\chi_E}{\rho} d\mu  = \int
\left( \int \frac{\chi_E(y)}{\rho(y)} dm_x^u(y) \right) dm(x) \\ &
= \int \left(\int_E \phi_x dm_x^u \right) dm(x) = \int \tilde{m}_x^u(E) \ dm(x) \end{align} 
for every Borel set $E \subset M$.
\end{proof}

\begin{rem}
 Proposition \ref{disintegration} may be seen as an adaptation of the disintegration technique used in \cite{Andersson}. However, the proof given here is more concise and calls for abstraction: if $ M \ni x \mapsto \nu_x \in \M(M)$ is a measurable family such that $\int \nu_x \ d\nu(x) \sim \nu$, then there exists a measurable family $M\ni x \mapsto \tilde{\nu}_x \in \M(M)$, such that $\tilde{\nu}_x \sim \nu_x$ $\nu$-almost everywhere, and $\int \tilde{\nu}_x \ d\nu(x) = \nu$ (same proof). 
\end{rem}

\begin{cor}\label{full measure almost everywhere}
Let $f \in \NA$, and let $S$ be any Borel set of full Lebesgue measure. Denote by $S^*$ the set of points $x \in M$ such that $\Wlu(x)$ intersect $S$ in a set of full $m_x^u$-measure. Then $m(S^*)=1$.
\end{cor}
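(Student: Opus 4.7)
The strategy is to apply the non-singular disintegration of Proposition~\ref{disintegration} directly. Let $S$ be a Borel set with $m(S) = 1$, so that $m(M\setminus S) = 0$. Using the identity $m = \int \tilde{m}_x^u \, dm(x)$, I would compute
\begin{equation*}
0 = m(M\setminus S) = \int \tilde{m}_x^u(M\setminus S) \, dm(x).
\end{equation*}
Since the integrand is non-negative, this forces $\tilde{m}_x^u(M\setminus S) = 0$ for $m$-almost every $x$.

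The next step is to upgrade this conclusion from $\tilde{m}_x^u$ to $m_x^u$. Proposition~\ref{disintegration} guarantees that $\tilde{m}_x^u \sim m_x^u$ for $m$-almost every $x \in M$ (in particular, for $m$-almost every $x \in H$, since $m_x^u$ is only defined there). For such $x$, the mutual absolute continuity implies that any $\tilde{m}_x^u$-null set is also $m_x^u$-null, so $m_x^u(M \setminus S) = 0$, i.e. $m_x^u(\Wlu(x) \cap S) = m_x^u(\Wlu(x))$. This is exactly the statement that $x \in S^*$.

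Finally, since $m(H) = 1$ by definition of $\NA$, and the conditions needed above hold for $m$-almost every $x \in H$, we conclude $m(S^*) = 1$. The argument is essentially a one-line application of Fubini-type reasoning to the decomposition $m = \int \tilde{m}_x^u \, dm(x)$; there is no real obstacle, the whole point being that Proposition~\ref{disintegration} was tailored precisely to make this kind of conclusion immediate (in contrast to a Rohlin disintegration, which would be set-theoretically awkward because the family $\{\Wlu(x)\}$ is not a partition).
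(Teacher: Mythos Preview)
Your proof is correct and essentially identical to the paper's own argument: both apply the disintegration $m = \int \tilde{m}_x^u\, dm(x)$ to the full-measure set (you to $M\setminus S$, the paper to $S$ directly), deduce $\tilde{m}_x^u(M\setminus S)=0$ almost everywhere from nonnegativity of the integrand, and then pass to $m_x^u$ via the equivalence $\tilde{m}_x^u \sim m_x^u$.
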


\begin{proof}
 Proposition \ref{disintegration} gives
\[ m(S) = \int \tilde{m}_x^u(S) dm(x) = 1,
\]
whence it follows that $\tilde{m}_x^u(S) = \vert \tilde{m}_x^u \vert$ for $m$-almost every $x \in H$. The statement follows since $\tilde{m}_x^u \sim m_x^u$ for $m$-almost every $x \in M$.
\end{proof}

\begin{cor}\label{full measure in component}
 Let $f \in \NA$ and suppose that $A\subset M$ is (a representative of) an ergodic component of $f$. Let $B$ be any Borel set with 
$m(A \bigtriangleup B)=0$. Then $m_x^u(B) = \vert m_x^u \vert$ for $m$-almost every $x\in A$.
\end{cor}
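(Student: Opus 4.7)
The goal is to show that for $m$-almost every $x \in A$, the local unstable manifold through $x$ is contained in $B$ up to a $m_x^u$-null set. The plan is to first reduce to the problem of showing $m_x^u(A) = |m_x^u|$ on $m$-a.e.\ $x \in A$, and then run a Hopf-style argument using Corollary~\ref{full measure almost everywhere} as the bridge between Lebesgue-generic and $m_x^u$-generic statements.

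First I would dispose of the distinction between $A$ and $B$. Since $m(A \triangle B) = 0$, the set $S := M \setminus (A \triangle B)$ has full Lebesgue measure, so by Corollary~\ref{full measure almost everywhere}, $m_x^u(A\triangle B) = 0$ for $m$-a.e.\ $x$. Hence $m_x^u(A) = m_x^u(B)$ for $m$-a.e.\ $x$, and it suffices to show $m_x^u(A) = |m_x^u|$ on $m$-a.e.\ $x \in A$.

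Next I would set up the Hopf machinery. Choose an $f$-invariant representative of $A$ (so $f(A)=A$), and fix a countable $C^0$-dense family $\{\varphi_k\}\subset C^0(M,\mathbb{R})$. By Birkhoff applied to the ergodic system $(f^{-1}, m_A)$, the set
\[
A^* := \{ y \in A : \varphi_{k,-}(y) = \textstyle\int \varphi_k\, dm_A \text{ for every } k\}
\]
satisfies $m(A \setminus A^*) = 0$; moreover, since for any other ergodic component $A'$ the backward Birkhoff limit at $m_{A'}$-typical points equals $\int \varphi_k\, dm_{A'} \neq \int\varphi_k\, dm_A$ (for some $k$, by distinctness of ergodic components), we have $m(A^* \setminus A) = 0$ as well, so $A^*$ represents $A$. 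Birkhoff applied to $(f^{-1}, m)$ also gives a Borel set $G \subset M$ of full Lebesgue measure on which $\varphi_{k,-}$ exists for every $k$. Now apply Corollary~\ref{full measure almost everywhere} to $G$: for $m$-a.e.\ $x$, we get $m_x^u(G) = |m_x^u|$.

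The final step is the classical observation that backward Birkhoff averages are constant along unstable sets. If $y \in \Wlu(x) \subset W^u(x)$, then $d(f^{-n}(x), f^{-n}(y)) \to 0$, and uniform continuity of each $\varphi_k$ on the compact manifold $M$ forces the difference of Cesàro averages to vanish; hence whenever both $\varphi_{k,-}(x)$ and $\varphi_{k,-}(y)$ exist, they coincide. Combining everything: for $m$-a.e.\ $x \in A^*$ with $m_x^u(G) = |m_x^u|$, and for $m_x^u$-a.e.\ $y \in \Wlu(x) \cap G$, we have $\varphi_{k,-}(y) = \varphi_{k,-}(x) = \int \varphi_k\, dm_A$ for every $k$, hence by density for every continuous $\varphi$. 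Thus $y \in A^* \subset A$, proving $m_x^u(A) = |m_x^u|$ as required.

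The only subtle point, and what I expect to be the main thing to verify carefully, is the characterization $A^* = A$ modulo $m$-null sets — i.e.\ that genericity for $m_A$ in backward time pins down the correct ergodic component. This is a direct application of the Pesin/Birkhoff spectral decomposition mentioned earlier in the paper, so it causes no real trouble; the rest of the argument is a mechanical assembly of Proposition~\ref{disintegration} (through its consequence Corollary~\ref{full measure almost everywhere}) with standard Hopf-type invariance of Birkhoff averages along unstable manifolds.
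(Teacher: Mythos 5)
Your proof is correct and follows essentially the same route as the paper's: both pass to the backward-Birkhoff-generic representative of $A$ (your $A^*$, the paper's $A_-$), use that backward Birkhoff averages are constant along $\Wlu(x)$ so that this representative is saturated by local unstable manifolds, and then push the conclusion through the disintegration of Proposition \ref{disintegration} (you do this via its consequence, Corollary \ref{full measure almost everywhere}, which is the same computation). The only cosmetic differences are your detour through the set $G$ where backward averages exist (unnecessary, since convergence at $x$ already forces convergence at every $y\in \Wlu(x)$) and the final identification of $A^*$ with $A$ modulo null sets, which needs one more application of Corollary \ref{full measure almost everywhere} but causes no trouble.
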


\begin{proof}
 Consider the Borel set 
\begin{equation*}
 A_- = \{x\in M : \lim_{n \rightarrow \infty} \frac{1}{n} \sum_{k=0}^{n-1} \varphi (f^{-k}(x)) = \int \varphi \ dm_A
\  \forall  \varphi \in C^0( M, \mathbb{R}) \}.
\end{equation*}
By ergodicity of $m_A$ we have $m(A \bigtriangleup A_-) = 0$, so that $A_-$ is just another representative of $A_-$ with the special property that $\Wlu(x) \subset A_-$ for every $x \in A_-$. In particular, $m_x^u(A_-) = \vert m_x^u \vert$ for every $x \in A_-$. Applying Proposition \ref{disintegration}, we get 
\begin{equation*}
 0 = m(B \bigtriangleup A_-) = \int \tilde{m}_x^u (B \bigtriangleup A_-) dm(x),
\end{equation*}
so that $\tilde{m}_x^u (B \bigtriangleup A_-)= 0$ at $m$-almost every $x \in M$. In particular, $\tilde{m}_x^u(B) = \vert \tilde{m}_x^u \vert$ for $m$-almost every $x\in A$. The proof follows since $m_x^u \sim \tilde{m}_x^u$ for $m$-almost every $x\in M$.

\end{proof}

\subsection{A geometric estimate}

At the heart of the proof of Theorem \ref{main} lies a simple, yet powerful, geometric estimate. Its intuitive content is that if the volume of a manifold with boundary is much larger than the volume of its boundary, then most points in the manifold see nothing or a very small portion of the boundary in its proximity. Despite its simplicity, it constitutes the most important step in understanding how inflatablility yields essential stable or unstable manifolds of a desired span.

Recall from section \ref{statements} that we take Riemannian manifold to mean a $C^1$ manifold with a continuous metric.

\begin{defn}
 Let $D$ be a $d$-dimensional Riemannian manifold, possibly with boundary. 
We say that $K>0$ is a bound for the geometry of $D$, at level $\delta>0$, if 
$m_D (B_{\delta}^D(p)) \leq K$ for every $p \in D$.
\end{defn}

\begin{prop} \label{geometric}

Let $D$ be a compact manifold with boundary, whose geometry at level $\delta$ is bounded by $K$. Denote by $\theta: D \rightarrow \mathbb{R}$ the map $p \mapsto m_{\partial D} (B_{\delta}^D(p))$ i.e. that which measures how much of the boundary of $D$ is seen in the ball of radius $\delta$, centered at $p$. Then we have
\begin{equation*} \int \theta  \ dm_D  \leq K 
\vert m_{\partial D} \vert. 
\end{equation*} 
\end{prop}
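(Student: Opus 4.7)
This is a straightforward Fubini-type double-counting argument. The key observation is the symmetry of the intrinsic distance: $q \in B_\delta^D(p)$ if and only if $p \in B_\delta^D(q)$, so the function $\theta(p)$ can be written as an integral of a symmetric characteristic function over $\partial D$, and then the roles of $p$ and $q$ can be exchanged.

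Concretely, I would first unpack the definition as
\begin{equation*}
\theta(p) \;=\; m_{\partial D}\bigl(B_\delta^D(p)\bigr) \;=\; \int_{\partial D} \chi_{\{d^D(p,q) < \delta\}} \, dm_{\partial D}(q),
\end{equation*}
and then integrate over $p \in D$ with respect to $m_D$. Applying Fubini--Tonelli to exchange the order of integration yields
\begin{equation*}
\int_D \theta \, dm_D \;=\; \int_{\partial D} m_D\bigl(\{p \in D : d^D(p,q) < \delta\}\bigr) \, dm_{\partial D}(q) \;=\; \int_{\partial D} m_D\bigl(B_\delta^D(q)\bigr) \, dm_{\partial D}(q).
\end{equation*}
Since $\partial D \subset D$ and $K$ bounds the geometry of $D$ at level $\delta$, we have $m_D(B_\delta^D(q)) \leq K$ for every $q \in \partial D$, and the desired estimate follows upon pulling the constant out of the remaining integral.

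I expect no serious obstacle: the only minor point is to verify joint measurability of $(p,q) \mapsto \chi_{\{d^D(p,q) < \delta\}}$, which is immediate from continuity of $d^D$ so that Fubini--Tonelli applies. The real content of the statement, as the paper emphasises, lies not in the proof but in its deployment, where the roles of $D$ and $\partial D$ will be played by an iterate of a local unstable disk and its boundary: the estimate will then say that, whenever the inflatability condition forces the boundary to be volumetrically negligible compared to the disk, only a small fraction of points in the disk can be close to its boundary, which is precisely the mechanism that produces essential unstable manifolds of prescribed span.
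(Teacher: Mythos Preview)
Your proof is correct and follows exactly the same Fubini argument as the paper: write $\theta$ as an integral of the symmetric indicator over $\partial D$, switch the order of integration, and bound each inner integral by $K$. The paper's proof is essentially identical, described as ``one simple trick''.
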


\begin{proof}
The proof consists of one simple trick. By applying Fubini's theorem on the product space $(D \times \partial D, m_D \times m_{\partial D})$
we obtain 
\begin{align}
 \int \theta  \ dm_D  &= \int \left( \int \chi_{ \{(x,y) \in D
\times
\partial D : d(x, y) < \delta \} }  dm_{\partial D}(y) \right) dm_D(x) \\ &= \int
\left( \int \chi_{ \{(x,y) \in D \times \partial D : d(x, y) <
\delta \} }  dm_D(y) \right) dm_{\partial D}(x) \\
&= \int m_D (B_{\delta}(y)) dm_{\partial D}(y) \leq K 
\vert m_{\partial D} \vert\end{align}
as required.
\end{proof}

\subsection{Small sight of the boundary}\label{small sight}

It is a consequence of the compactness of $M$ and continuity of the $E^{\sigma}$, $\sigma =s,u$, that, given any $\delta>0$, there exists $K>0$ such that every manifold $D$ (with or without boundary), tangent to either of $E^{cs}$ or $E^{cu}$, has geometry bounded by $K$ at level $\delta$.
Given any $x \in H$ and $n \geq 0$, we write $ D_{x,n} = \overline{f^n(\Wlu(x))}$, $ D_{x,n}^{\circ} = f^n(\Wlu(x))$, and define
\begin{eqnarray}
\theta_{x,n}: D_{x,n} & \rightarrow & \mathbb{R} \\
      p & \mapsto & m_{\partial D_{x,n}} (B_{\delta} (p)).
\end{eqnarray}
For any real $h>0$, let $G_{x,n,h} = \{y \in D_{x, n}^{\circ} : \theta_{x,n}(y) < h \}$. When considering a small value of $h$, points in $G_{x,n,h}$ are good  in the sense that their $\delta$-neighbourhood see only a small piece of  $\partial D_{x,n}$. Consequently, the open ball $B_{\delta}^{D_{x,n}^{\circ}}(p)$, centered at $p \in G_{x,n,h}$, is nearly a `full disk', consisting only of points in the global unstable manifold $\Wsu(p)$ of $p$. 

To prove Theorem \ref{main} we define a set of full Lebesgue measure in $A$ on which every point admits an essential unstable manifold of uniform span. The set we consider is
\[G = \bigcap_{h>0}  \bigcup_{n \geq 0}  \bigcup_{x \in H \cap A} G_{x,n,h}.
\]

\begin{lem}\label{full measure}
 The set $G$ has full Lebesgue measure in $A$.
\end{lem}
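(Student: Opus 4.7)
The plan is to fix $h>0$ and prove $m(A\setminus G_h)=0$ for $G_h:=\bigcup_{n\geq 0}\bigcup_{x\in H\cap A}G_{x,n,h}$; the lemma then follows since $G=\bigcap_{k\geq 1}G_{1/k}$ is a countable intersection. Set $\chi:=\int\log|\det Df|_{E^{cu}}|\,dm_A$ and $C:=\sup_{x\in M}\|\wedge^{cu-1}Df|_{E^{cu}_x}\|$. The $cu$-inflatability of $A$ (after replacing $f$ by the iterate $f^{n_0}$ furnished by the definition, noting that $G^{f^{n_0}}_h\subset G^f_h$) gives $\chi>\log C$. By Birkhoff's theorem applied to $(f,m_A)$ together with Corollary \ref{full measure almost everywhere}, one identifies a full-$m$-measure set $A^{\ast}\subset A\cap H$ such that, for every $p\in A^{\ast}$, $m_p^u$-almost every $y\in\Wlu(p)$ lies in $A$ and satisfies $\tfrac{1}{n}\log|\det Df^n|_{E^{cu}_y}|\to\chi$.

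The first concrete step is to compare the volumes of $D_n^p=f^n(\Wlu(p))$ and its boundary $\partial D_n^p=f^n(\partial\Wlu(p))$. Applying Egorov to the Birkhoff convergence extracts a positive-$m_p^u$-measure subset of $\Wlu(p)$ on which $|\det Df^n|_{E^{cu}_y}|\geq e^{n(\chi-\varepsilon)}$ uniformly for large $n$, so that $|m_{D_n^p}|\geq c_p\,e^{n(\chi-\varepsilon)}$. On the other hand, since the tangent space to $\partial\Wlu(p)$ is $(cu-1)$-dimensional inside $E^{cu}$, one has $|m_{\partial D_n^p}|\leq C^n|m_{\partial\Wlu(p)}|$. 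Choosing $\varepsilon$ with $\chi-\log C-\varepsilon>0$, the ratio $|m_{\partial D_n^p}|/|m_{D_n^p}|$ decays exponentially. Combining Proposition \ref{geometric} with Chebyshev's inequality produces
\[
\frac{m_{D_n^p}(D_n^p\setminus G_{p,n,h})}{|m_{D_n^p}|}\;\leq\;\frac{K}{h}\cdot\frac{|m_{\partial D_n^p}|}{|m_{D_n^p}|}\;\longrightarrow\;0\qquad (n\to\infty),
\]
so $G_{p,n,h}\subset G_h$ has relative $m_{D_n^p}$-density tending to one.

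The main obstacle is upgrading this density statement on the $cu$-dimensional slice $D_n^p$ to a full Lebesgue-measure statement on $A$. My plan is first to pull the previous bound back under $f^{-n}$: using
\[
\int_{\Wlu(p)\setminus f^{-n}(G_{p,n,h})}|\det Df^n|_{E^{cu}_y}|\,dm_p^u(y)\;\leq\;K\,|m_{\partial D_n^p}|/h
\]
together with the Egorov lower bound produces $m_p^u\bigl(\Wlu(p)\setminus f^{-n}(G_{p,n,h})\bigr)=O\bigl(e^{-n(\chi-\log C-\varepsilon)}\bigr)$, which is summable in $n$. Borel--Cantelli then guarantees that, for $m_p^u$-almost every $y\in\Wlu(p)$, $f^n(y)\in G_{p,n,h}\subset G_h$ for all sufficiently large $n$; integrating over $p$ via the non-singular disintegration (Proposition \ref{disintegration}) shows that the set of $y\in M$ whose forward orbit lies eventually in $G_h$ has full $m$-measure.

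The remaining delicate step, which I expect to be the hardest, is promoting this forward-orbit statement to the pointwise membership $y\in G_h$ for $m$-a.e.\ $y\in A$. The freedom in the definition of $G_h$ to choose $x$ freely in $H\cap A$ is what makes this possible: for $m$-a.e.\ $y\in A$, Poincar\'e recurrence combined with Pesin's subexponential control of the radius function $r$ supplies a subsequence of $k$ along which $f^{-k}(y)\in A^{\ast}$ and $r(f^{-k}(y))$ is bounded below in a fixed Pesin block; applying the preceding density estimate with base point $x=f^{-k}(y)$ and then checking that the specific point $y=f^k(x)$ falls in the good part $G_{x,k,h}$ uses the same Chebyshev-type bound, now averaged along the recurrence. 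The non-invariance of $G_h$ under $f$ means that one must carefully balance the exponential smallness of the bad fraction on $D_k^x$ against the sub-exponential fluctuations of $r$ along the backward orbit of $y$; this bookkeeping, rather than the geometric estimate itself, is where I expect the argument to demand the most care.
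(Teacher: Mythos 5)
Your Egorov--Borel--Cantelli route to the key estimate is a legitimate alternative to the paper's argument: the paper instead proceeds by contradiction, applying Jensen's inequality to the putative positive-$m_x^u$-measure set $E_{h,x}=\bigcap_{n\geq 0}F_{h,n,x}^c$ of points that are never good, which forces $m_{D_{x,n}}(G_{x,n,h}^c)$ to grow at the exponential rate $\int \log\vert\det Df_{\vert E_x^{cu}}\vert\,dm_A$, contradicting the upper bound $\inf_{n}\Xi_n^{cu}(f)/n$ of Lemma \ref{maximum growth}. Both arguments use the same two ingredients (Proposition \ref{geometric} plus Chebyshev for the boundary volume, Birkhoff for the interior volume) and both deliver the statement that $m_x^u\bigl(f^{-n}(G_{x,n,h})\bigr)\to\vert m_x^u\vert$ for $m$-a.e.\ $x$, hence, after integrating with the disintegration of Proposition \ref{disintegration}, that $m(F_{h,n})\to m(A)$, where $F_{h,n}=\bigcup_{x}f^{-n}(G_{x,n,h})$. (One small slip on the way: the Egorov lower bound on $\vert\det Df^n_{\vert E_y^{cu}}\vert$ holds only on the Egorov set $Y_\varepsilon$, so your summable estimate is for $m_p^u\bigl(Y_\varepsilon\setminus f^{-n}(G_{p,n,h})\bigr)$ rather than for all of $\Wlu(p)$; since $m_p^u(Y_\varepsilon)$ can be taken arbitrarily close to $\vert m_p^u\vert$, Borel--Cantelli still gives what you want.)

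The genuine gap is in your last paragraph. The passage from ``$m$-a.e.\ $y\in A$ lies in $F_h=\bigcup_n\bigcup_x f^{-n}(G_{x,n,h})$'' to ``$m$-a.e.\ $y\in A$ lies in $G_h=\bigcup_n\bigcup_x G_{x,n,h}$'' is not a delicate pointwise problem: since $f$ preserves $m$, one has $m(F_{h,n})=m\bigl(f^{-n}(\bigcup_x G_{x,n,h})\bigr)=m(G_{h,n})$, so $m(G_h)\geq\limsup_n m(G_{h,n})=m(A)$, and $G_h$ is contained in $A$ modulo a null set because each $G_{x,n,h}$ sits inside $f^n(\Wlu(x))$ with $x\in A$. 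This one-line conservativity observation is exactly how the paper closes the argument, and it is the reason the pulled-back sets $F_{h,n}$ are introduced at all. Your proposed substitute --- Poincar\'e recurrence to find $k$ with $x=f^{-k}(y)$ in a fixed Pesin block, followed by ``checking that the specific point $y=f^k(x)$ falls in the good part $G_{x,k,h}$'' --- does not work as described: the Chebyshev estimate controls only the \emph{proportion} of $D_{x,k}$ lying in $G_{x,k,h}^c$ and says nothing about whether the particular point $f^k(x)$ (the centre of the iterated disk, which may perfectly well remain intrinsically close to $\partial D_{x,k}$ for all $k$) is among the good ones; no averaging along recurrence times converts a density statement on the leaves into membership of a prescribed point. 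So as written the final step of your proof is missing, although the repair is immediate once one exploits that $f$ is volume preserving.
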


Before proving Lemma \ref{full measure} we establish an auxiliary result. Write 
\begin{equation*}
\Xi_n^{cu} (f) := \log \left( \sup_{x \in M} \| \wedge^{cu-1}Df_{\vert E_x^{cu}}^n \| \right).
\end{equation*}
Notice that $\Xi_n^{cu}(f)$ is sub-additive, i.e. $\Xi_{n+m}^{cu}(f) \leq \Xi_n^{cu}(f) + \Xi_m^{cu}(f)$ for all $n,m \geq 0$.

\begin{lem}\label{maximum growth}
Given any $x \in H$ and $h>0$ we have
\[\limsup_{n \rightarrow \infty} \frac{1}{n} \log m_{D_{x,n}}(G_{x,n,h}^c)
\leq \inf_{n\geq 0} \frac{\Xi_n^{cu}(f)}{n} .\]
\end{lem}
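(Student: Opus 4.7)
The plan is to combine the geometric estimate of Proposition \ref{geometric} (applied to $D = D_{x,n}$) with a simple Markov inequality, and then to control the boundary volume $|m_{\partial D_{x,n}}|$ using the $(cu-1)$-exterior power of $Df$.

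First I would observe that, by uniform continuity of $E^{cu}$ and compactness of $M$, there is a single constant $K>0$ that bounds the geometry at level $\delta$ of every $C^1$-manifold tangent to $E^{cu}$. In particular this $K$ works for $D_{x,n}$ for every $x\in H$ and every $n\geq 0$. Applying Proposition \ref{geometric} and then Markov's inequality gives
\begin{equation*}
 m_{D_{x,n}}(G_{x,n,h}^c) \;=\; m_{D_{x,n}}\bigl(\{\theta_{x,n}\geq h\}\bigr)\;\leq\;\frac{1}{h}\int\theta_{x,n}\,dm_{D_{x,n}}\;\leq\;\frac{K}{h}\,\bigl|m_{\partial D_{x,n}}\bigr|.
\end{equation*}

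Next I would estimate $|m_{\partial D_{x,n}}|$. Since $\partial D_{x,n}=f^n(\partial\Wlu(x))$ is a $(cu-1)$-dimensional $C^1$-manifold whose tangent spaces are contained in $E^{cu}$, its volume is expanded under one iterate of $f$ by a factor at most $\sup_{y\in M}\|\wedge^{cu-1}Df_{\vert E_y^{cu}}\|$; more generally, applying $f^k$ expands it by a factor at most $e^{\Xi_k^{cu}(f)}$. Writing an arbitrary $n$ in the form $n=qk+r$ with $0\leq r<k$ and using that $\Xi_r^{cu}(f)$ stays bounded for $r\in\{0,\dots,k-1\}$, this yields, for any fixed $k\geq 1$,
\begin{equation*}
 \bigl|m_{\partial D_{x,n}}\bigr|\;\leq\;C_k\,e^{q\,\Xi_k^{cu}(f)}\,\bigl|m_{\partial \Wlu(x)}\bigr|\;\leq\;C_k'\,\exp\!\bigl(n\cdot\tfrac{\Xi_k^{cu}(f)}{k}\bigr)\,\bigl|m_{\partial \Wlu(x)}\bigr|,
\end{equation*}
where $C_k,C_k'$ depend only on $k$ and $f$.

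Combining the two displays, taking logarithms, dividing by $n$ and letting $n\to\infty$, the prefactors disappear and we obtain
\begin{equation*}
 \limsup_{n\to\infty}\tfrac{1}{n}\log m_{D_{x,n}}(G_{x,n,h}^c)\;\leq\;\tfrac{\Xi_k^{cu}(f)}{k}
\end{equation*}
for every $k\geq 1$. Taking the infimum over $k$ then gives the claimed bound. (By Fekete's lemma, applied to the sub-additive sequence $\Xi_n^{cu}(f)$ noted right before the statement, this infimum equals $\lim_{k\to\infty}\Xi_k^{cu}(f)/k$, so in the end we could replace $\inf$ by $\lim$ in the conclusion.)

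The only non-routine step is the volume bound for $\partial D_{x,n}$: one has to recognise that the tangent spaces to $\partial D_{x,n}$ are $(cu-1)$-dimensional subspaces of $E^{cu}$, so the pointwise expansion of $(cu-1)$-dimensional volume elements is governed precisely by the operator norm $\|\wedge^{cu-1}Df_{\vert E^{cu}}\|$ that appears in $\Xi_k^{cu}(f)$. Once this is in place everything else is just Fubini (inside Proposition \ref{geometric}), Markov, and Fekete.
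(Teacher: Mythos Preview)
Your proof is correct and follows essentially the same approach as the paper: Chebyshev/Markov combined with Proposition~\ref{geometric}, then a bound on $|m_{\partial D_{x,n}}|$ via the $(cu-1)$-exterior power of $Df|_{E^{cu}}$. The only cosmetic difference is that the paper applies the boundary estimate directly for $n$ iterates, obtaining $|m_{\partial D_{x,n}}|\leq |m_{\partial D_{x,0}}|\,e^{\Xi_n^{cu}(f)}$ and then invoking subadditivity (Fekete) to pass from $\limsup_n \Xi_n^{cu}(f)/n$ to $\inf_n \Xi_n^{cu}(f)/n$, whereas you unfold that step by hand via the $n=qk+r$ decomposition; these are the same argument.
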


\begin{proof}
 By Chebychev's inequality we have 
\[m_{D_{x,n}}(G_{x,n,h}^c) 
\leq \frac{1}{h} \int \theta_{x,n} \ dm_{D_{x,n}} \leq \frac{K}{h} \vert m_{\partial D_{x,n}} \vert.\] The proof follows since $\vert m_{\partial D_{x,n}} \vert \leq \vert m_{\partial D_{x,0}} \vert e^{\Xi_n^{cu}(f)}$.
\end{proof}

\begin{proof}[Proof of Lemma \ref{full measure}]

We write $F_{h,n,x} = f^{-n} (G_{h,n,x})$ and 
\begin{equation*} 
F_{h, n}  = \bigcup_{x \in H \cap A} F_{h,n,x}, \quad 
F_h  = \bigcup_{n \geq 0} F_{h,n}, \quad
F  = \bigcap_{h >0} F_h.
\end{equation*}

Notice that since $f$ is conservative, we have $m(F_{h,n}) = m( f^{-n} G_{h,n}) = m(G_{h,n})$. So in order to prove the Lemma, it is sufficient to establish that, for all $h>0$,  
\begin{equation}\label{limit}
\lim_{n \rightarrow \infty} m(F_{h,n}) = m(A).
\end{equation}
For then $m(F_h) = m(A)$ for every $h>0$ and the proof follows by intersecting $F_h$ over all positive rational values of $h$. Furthermore, (\ref{limit}) will be proved once we have shown that, given any $h>0$, and $m$-almost every $x \in H\cap A$, we have 
\begin{equation}\label{limit is one}
\lim_{n \rightarrow \infty} m_x^u( F_{h,n,x}) = \vert m_x^u \vert.
\end{equation}
For then we may use the disintegration from Proposition \ref{disintegration} to conclude that
\begin{align}
 m(F_{h,n}) & = \int \tilde{m}_x^u(F_{h,n}) \ dm(x) \geq \int_A \tilde{m}_x^u (F_{h,n,x}) \ dm(x) \\
 & \rightarrow \int_A \vert \tilde{m}_x^u \vert dm(x) = m(A) \text{ as } n \rightarrow \infty,
\end{align}
by the Dominated Convergence Theorem. 

We will prove (\ref{limit is one}) by contradiction. But first let us determine the set of points on which (\ref{limit is one}) holds. For $n \geq 0$ write $\xi_n^{cu}(f, x) = \log \vert \det Df_{\vert E_x^{cu} }^n \vert$ and let 
\begin{equation*}
B = \{x \in A \cap H: \lim_{n \rightarrow \infty} \frac{1}{n}  \xi_n^{cu}(x) = \int \xi_0^{cu} dm_A \}.
\end{equation*} 
By Corollary (\ref{full measure in component}), the set $B^* = \{x \in B: \tilde{m}_x^{cu}(B) = \vert \tilde{m}_x^{cu} \vert \}$ has full Lebesgue measure in $A$. The claim is that (\ref{limit is one}) holds for every $x \in B^*$. Suppose it is false. Then, noticing that $\{F_{h,n,x}^c \}_{n\geq0}$ is a decreasing sequence in $n$, we must have hat $m_x^u(E_{h,x})>0$ for some $x\in B^*$, where $E_{h,x} = \bigcap_{n\geq 0} F_{h,n,x}^c$. We simplify notation in the next calculation by writing $\mu = m_x^u$, $E = E_{h,x}$, and $\mu_E$ for the normalised restriction of $\mu$ to $E$. Application of Jensen's inequality gives

\begin{align} \liminf_{n \rightarrow \infty} \frac{1}{n} \log m_{D_{x,n}}(G_{x,n,h}^c) 
& \geq \liminf_{n \rightarrow \infty} \frac{1}{n} \log \int_E \vert \det Df_{\vert E_y^{cu} }^n \vert \ d\mu(y) \\
& \geq \liminf_{n \rightarrow \infty}  \int \frac{1}{n} \xi_n^{cu} \ d\mu_E + \frac{1}{n} \log \mu(E) \\
 & \geq \int \xi_0^{cu} \ dm_A> \inf_{n\geq 0} \frac{\Xi_n^{cu}(f)}{n}, 
\end{align}
contradicting Lemma \ref{maximum growth}.

\end{proof}

\subsection{Construction of essential unstable manifolds}

In this section we prove that every $x \in G$ has an essential unstable manifold $\Gamma_x$ of uniform span $\delta$, for some small $\delta>0$. Once that is done, it is a small step to show that $m$-almost every $x \in A$ has an essential unstable manifold of infinite span. Indeed, we may write $G^1 = \{x \in G: m_{\Gamma_x}(G) = \vert m_{\Gamma_x} \vert \}$, $G^2 = \{x \in G^1: m_{\Gamma_x}(G^1) = \vert m_{\Gamma_x}\vert \}$ etc., and $G^{\infty} = \bigcap_{n \geq 1} G^n$. By Corollary \ref{full measure in component}, each $G^n$ has full Lebesgue measure in $A$, and so has $G^{\infty}$. Now pick $x \in G^{\infty}$. Then 
\begin{equation*}
\Gamma_x^{(1)} : = \bigcup_{y \in \Gamma_x \cap G^{\infty}} \Gamma_y 
\end{equation*}
is an essential unstable manifold of span $2 \delta$, 
\begin{equation*}
\Gamma_x^{(2)} = \bigcup_{y \in \Gamma_x^{(1)} \cap G^{\infty} } \Gamma_y 
\end{equation*}
 is an essential unstable manifold of span $3 \delta$ etc.

We show that every $x\in G$ has an essential unstable manifold of uniform span by taking some small $\rho>0$ and defining a $C^1$ map $\psi_x$ from $E_x^{cu} (\rho)$ (the ball of radius $\rho$ in $E_x^{cu}$, centered at the origin) to $E_x^{cs}$ such that $\Gamma_x := \exp_x( \graph \psi_x )$ is an essential unstable manifold. All we require from $\rho$ is that $\graph \psi_x$ must be in the domain of definition of $\exp_x$ and will depend only on how much $E^{cu}$ varies in exponential charts.

Denote by $\pi: E_x^{cu} \times E_x^{cs} \rightarrow E_x^{cu}$ the projection onto the first coordinate, and by $\mathcal{C}_x$ the deformed cylinder $\exp_x (E_x^{cu} (\rho) \times E_x^{cs})$. 

Fix $h$ a bit larger than $\rho$. From the definition of $G$, there is some sequence of non-negative integers $n_k$, $k\geq 0$ and points $x_k \in M$ such that 
\begin{equation*}
m_{\partial D_{x_k, n_k}} (B_{h}^{D_{x_k, n_k}}(x)) \rightarrow 0 \text{ as } k \rightarrow \infty.
\end{equation*}
Denote by $\gamma_k$ the connected component of $f^{n_k}(\Wlu(x_k)) \cap \mathcal{C}_x$ containing $x$, and by $U_k$ the set $\pi(\exp_x^{-1} \gamma_k )$. Since $d(f^{-n}(x_k), f^{-n - n_k}(x)) \rightarrow 0$ exponentially fast, and the size of local unstable manifolds vary subexponentially along orbits, there exists, for each $x_k$, an integer $N_k$, such that $f^{-N_k}(\Wlu(x_k)) \subset \Wlu(f^{-N_k - n_k}(x))$. Let $\tilde{\gamma}_k$ be the connected component of $f^{N_k + n_k}(\Wlu(f^{-N_k - n_k}(x))\cap \mathcal{C}_x$ containing $x$. Then $\tilde{\gamma}_k \supset \gamma_k$ so that $\tilde{U}_k:= \pi(\exp_x^{-1} \tilde{\gamma}_k ) \supset U_k$.

Write $U = \bigcup_{k \geq 0} U_k \subset E_x^{cu}(\rho)$ and $\tilde{U} = \bigcup_{k \geq 0} \tilde{U}_k$. For each $k$ and each $z \in \tilde{U}$ we define $\psi_x^k:\tilde{U}_k \rightarrow E_x^{cs}$ by the relation 
\[ \exp_x ( (z, \psi_x^k (z))) \in \tilde{\gamma}_k.\]
By definition, $\tilde{U}_{k+1} \supset \tilde{U}_k$ and ${\psi_x^{k+1}}_{\vert \tilde{U}_k} = \psi_x^k$ for all $k \geq 0$.
Hence we may define $\psi_x$ on $\tilde{U} = \bigcup_{k\geq 0} \tilde{U}_k \supset U$ by $\psi_x (z) = \psi_x^k$ for any $k$ such that $z \in \tilde{U}_k$.

\begin{description}
\item[Claim 1] $U$ (and hence $\tilde{U}$) has full Lebesgue measure in $E_x^{cu}(\rho)$.
\item[Claim 2] $\psi_x$ admits a $C^1$ extension on $E_x^{cu}(\rho)$.
\end{description}

To prove the first claim, choose some hyperplane $P \subset E_x^{cu}$ passing through the origin. Denote by $\pi_P: E_x^{cu}  \rightarrow P$ the orthogonal projection onto $P$. Now, each $\partial U_k$ is a $cu-1$ dimensional submanifold of $E_x^{cu}(\rho)$ whose $cu-1$ dimensional volume goes to zero as $k \rightarrow \infty$. Since the Jacobian of ${\pi_P}_{\vert \partial U_k} : \partial U_k \rightarrow P$ is at most $1$, the $cu-1$ dimensional volume of $\pi_P (\partial U_k)$ in $P$ goes to zero as $k \rightarrow \infty$. Clearly $E_x^{cu}(\rho) \setminus U_k \subset \pi_P^{-1} (\pi_P (\partial U_k)) \cap (E_x^{cu}(\rho))^c$, and the latter has $cu$ dimensional volume bounded by $2 \rho$ times the $cu-1$ dimensional volume of $\pi_P (\partial U_k)$.

To prove the second claim, pick any $p \in E_x^{cu}(\rho)$, and any $\epsilon>0$. We will show that by choosing a sufficiently small neighbourhood $V$ of $p$ $E_x^{cu}$, we have

\begin{align}
 & \sup_{k \geq 0 } \ \sup_{u, v \in V \cap U_k} \| \psi_x^k(u)-\psi_x^k(v) \|  < \epsilon \label{cont 1} \\
 & \sup_{k \geq 0 } \ \sup_{u, v \in V \cap U_k} \| D \psi_x^k(u)- D \psi_x^k(v) \|  < \epsilon. \label{cont 2}
\end{align}

That implies that $\psi_x$ extends to a continuous function $\overline{\psi}_x: E_x^{cu}(\rho) \rightarrow E_x^{cs}$, and that $D\psi_x$ extend continuously to a continuous function $\Psi_x : E_x^{cu}(\rho)  \rightarrow L(E_x^{cu},E_x^{cs})$. It does not follow directly that $\Psi_x$ is the derivative of $\psi_x$, but it can be established by showing that, given any $p \in E_x^{cu}(\rho)$, and any $\epsilon>0$, there is a neighbourhood $V$ of $p$ such that 
\begin{equation}\label{cont 3}
 \sup_{k \geq 0} \ \sup_{u,v,w \in V\cap U_k} \frac{ \| \psi_x^k(u) - \psi_x^k(v) - \Psi_x(w)(u-v)\|}{\|u-v\|} < \epsilon. 
\end{equation}

Since $\exp_x( \graph \psi_x )$ is tangent to the continuous bundle $E^{cu}$, (\ref{cont 1}), (\ref{cont 2}), and (\ref{cont 3}) follows if we can prove that, given any $u,v \in U$, and any $\alpha>0$, there is a piecewise differentiable curve from $u$ to $v$, contained entirely in $U$, whose length is at most $\|u-v\| + \alpha$.  To this end, pick some $k_0$ sufficiently large so that $u, v \in U_{k_0}$. Let $B_u, B_v$ be balls of equal radius $\leq \alpha/2$ around $u$ and $v$, contained in $U_{k_0} \cap V $. Let $H$ be the orthogonal complement to $\{ \lambda (u-v): \lambda \in \mathbb{R} \}$ in $E_x^{cu}$, and $\pi : E_x^{cu} \rightarrow H$ the projection onto $H$.  Since the $cu-1$ dimensional volume of $\pi(\partial U_k)$ is at most equal to the $cu-1$ dimensional volume of $\partial U_k$, there is some $k \geq k_0$ such that the $cu-1$ dimensional volume of $\pi(\partial U_k)$ is strictly smaller than the $cu-1$ dimensional volume of $\pi(B_u)$ (which is the same as that of $\pi(B_v)$). That means that there exists a line segment from some point in $B_u$ to some point in $B_v$, parallel to $\{ \lambda (u-v): \lambda \in \mathbb{R} \}$, and entirely contained in $U_k$. By joining its endpoints with $u$ and $v$ by straight line segments, we have constructed a piecewise differentiable curve from $u$ to $v$ of length less than $\|u-v\| + \alpha$, thus concluding the proof of the second claim.
 
\begin{rem}
 The essential unstable manifolds obtained in the construction are $C^2$ on an open and dense subset of full leaf volume, since here they coincide with some iterate of a local unstable manifold on this set. However, it is not clear form the construction whether the full essential unstable manifold obtained is $C^2$ embedded, and it would be interesting to know whether or not that is the case.
\end{rem}

\subsection{Full unstable manifolds}

The last assertion in Theorem \ref{main} is that if $\dim E^{cu} \leq 2$, then $m$-almost every $x \in A$ has a full unstable manifold of infinite span. 
That is, not only do we have $m_{\Gamma_x}(\Wsu(x)) = \vert m_{\Gamma_x} \vert$, but actually $\Gamma_x \subset \Wsu(x)$. In particular, the unstable manifolds of span $\delta$ are $C^2$ in this case. For $\dim E^{cu} = 1$ this is trivial. For $\dim E^{cu} = 2$ it follows from the following observation. Let $D$ be a $2$-dimensional manifold with boundary such that $D$ has bounded geometry. Then, given any $\epsilon >0$, there exists $\beta>0$ such that if  $m_{\partial D}( B_{\delta}^D(p)) < \beta$, then $d^D(p, \partial D) > \delta - \epsilon$. 

The same thing is not true when $\dim E^{cu} \geq 3$. To see why, let $D_0$ be the closed unit ball in $\mathbb{R}^3$, centered at the origin. For every $\epsilon>0$ we can obtain a manifold with boundary $D_{\epsilon}$ by making small wormholes in $D_0$, approaching the origin such that $m_{\partial D_{\epsilon}}(B_1^{\mathbb{R}^3}(0))<\epsilon$ and $d(0, \partial D_{\epsilon})<\epsilon$. Still, it seems like a difficult task to understand whether this geometrical property can actually present itself upon iterating a local unstable manifold of a non-uniformly Anosov diffeomorphism.

\section{Proofs of auxiliary results}

The proof of the first auxiliary result (Proposition \ref{bi-inflatable implies ergodicity}) relies on the following modification of the Hopf-Anosov argument.

\begin{lem}\label{one large component implies local ergodicity}
Let $f \in \DS$ and suppose there is a Borel set $A \subset M$ with positive Lebesgue measure such that $f_{\vert A}$ is ergoidc, and such that $\lambda_+^{cs}(x)<0$ and $\lambda_-^{cu}(x) >0$ at $m$-almost every $x \in A$. If there is some $\delta >0$ such that $m$-almost every $x\in A$ has  essential stable and unstable
manifolds of span $\delta>0$, then there exists $\rho>0$ such that, for $m$-almost every $x \in A$, we have $m(B_{\rho}(x) \cap A^c) = 0$.
\end{lem}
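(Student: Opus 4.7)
The plan is to carry out a Hopf--Anosov argument using the essential invariant manifolds of uniform span $\delta$ as the ``large transversals'', while relying on Pesin's local stable and unstable manifolds only at Pesin-block scales. First I would fix a countable dense $\{\varphi_j\} \subset C^0(M,\mathbb{R})$, write $\varphi_{j,\pm}$ for the associated Birkhoff averages, and put $c_j := \int \varphi_j \, dm_A$. By ergodicity of $f_{\vert A}$, $\varphi_{j,\pm} \equiv c_j$ $m_A$-a.e.\ on $A$. Let $G \subset A$ be the full-Lebesgue-measure subset of $x$ that are $m_A$-generic, have $\varphi_{j,+}(x) = \varphi_{j,-}(x)$ for every $j$, and admit essential manifolds $\Gamma_x^s,\Gamma_x^u$ of span $\delta$. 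Since $\Wsu(x)$ has full $m_{\Gamma_x^u}$-measure in $\Gamma_x^u$ and $\varphi_{j,-}$ is constant on $W^u(x)$, the identity $\varphi_{j,-} \equiv c_j$ holds $m_{\Gamma_x^u}$-a.e.\ on $\Gamma_x^u$; Proposition~\ref{disintegration} applied to the Lebesgue-full-measure set $\{\varphi_{j,+} = \varphi_{j,-}\}$ upgrades this to $\varphi_{j,+} \equiv c_j$ $m_{\Gamma_x^u}$-a.e.\ as well, and symmetrically on $\Gamma_x^s$.

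Next I would introduce Pesin blocks $\Pi_\ell = \{y \in H : r(y) \geq 1/\ell\}$. Using continuity of $E^{cs},E^{cu}$ and the uniform angle bound between them, fix $\ell_0$ with $m(\Pi_{\ell_0}) > 0$ and pick $\rho > 0$ so small (in terms of $\delta$, $1/\ell_0$, and the angle bound) that for every $x \in G$ and every $y \in B_\rho(x) \cap \Pi_{\ell_0}$, the local stable manifold $\Wls(y)$ meets $\Gamma_x^u$ transversally at exactly one point $h(y)$, and similarly $\Wlu(y)$ meets $\Gamma_x^s$ at exactly one point. Now I argue by contradiction: if $m(B_\rho(x) \cap A^c) > 0$ for $x$ in a positive-measure subset of $G$, then Corollary~\ref{full measure in component} applied to the $f$-invariant set $A^c$, combined with Proposition~\ref{disintegration}, yields $z \in A^c$ with positive $\tilde{m}_z^u$-measure on $\Wlu(z) \cap B_\rho(x) \cap \Pi_{\ell_0}$. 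Pesin's absolute continuity of the stable holonomy sends $\tilde{m}_z^u$-positive-measure sets to $m_{\Gamma_x^u}$-positive-measure sets, so I may pick such a $y$ satisfying (i) $\varphi_{j,+}(y) = \varphi_{j,-}(y)$ (a Lebesgue-full condition), and (ii) $h(y)$ in the good subset of $\Gamma_x^u$ on which $\varphi_{j,\pm} \equiv c_j$. Since $y \in W^s(h(y))$, we get $\varphi_{j,+}(y) = \varphi_{j,+}(h(y)) = c_j$, while since $y \in W^u(z)$ and $z$ is a.e.\ backward generic for the ergodic component $B \subset A^c$ containing $z$, $\varphi_{j,-}(y) = c_j^B$. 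The equality $\varphi_{j,+}(y) = \varphi_{j,-}(y)$ then forces $c_j = c_j^B$ for every $j$, contradicting $A \neq B$ and establishing $m(B_\rho(x) \cap A^c) = 0$.

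The step I expect to be most delicate is the uniform choice of $\rho$: since $\Gamma_x^u$ is only $C^1$-immersed and tangent to $E^{cu}$ at $x$ without uniform higher regularity bounds away from $x$, the transversality $\Wls(y) \pitchfork \Gamma_x^u$ in exactly one point must be extracted from continuity of $E^{cs},E^{cu}$ and the uniform angle bound between them, rather than from a uniform graph estimate on $\Gamma_x^u$. Verifying that Pesin's absolute continuity theorem applies with $\Gamma_x^u$ as target is then routine, $\Gamma_x^u$ being a $C^1$ $cu$-disk transverse to $E^{cs}$.
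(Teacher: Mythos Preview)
Your argument has a genuine gap, and it lies precisely in the interplay between the Pesin block $\Pi_{\ell_0}$ and the radius $\rho$. You fix $\ell_0$ first, so that points $y\in\Pi_{\ell_0}$ have $\Wls(y)$ of size at least $1/\ell_0$, and then choose $\rho$ small relative to $1/\ell_0$ so that such a $\Wls(y)$ is guaranteed to reach $\Gamma_x^u$. In the contradiction step you then need a point $z\in A^c$ with $\tilde m_z^u\bigl(\Wlu(z)\cap B_\rho(x)\cap\Pi_{\ell_0}\bigr)>0$. But nothing you have written forces $m\bigl(B_\rho(x)\cap A^c\cap\Pi_{\ell_0}\bigr)>0$: the mass of $A^c$ inside $B_\rho(x)$ may lie entirely in higher Pesin blocks $\Pi_\ell$ with $\ell\gg\ell_0$, where the local stable manifolds are far too short to reach $\Gamma_x^u$. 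You cannot repair this by enlarging $\ell_0$ a posteriori, because $\rho$ was already chosen to depend on $\ell_0$; and you cannot first shrink $\rho$, because the putative bad set $A^c\cap B_\rho(x)$ may recede with it. This is a genuine scale mismatch, not merely a transversality issue as you suggest in your last paragraph. Note also that your essential \emph{stable} manifolds $\Gamma_x^s$ play no role in your contradiction; that is a symptom that something is missing.

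The paper breaks this circularity by a different mechanism: it does not try to project all of $B_\rho(x)\cap A^c$ onto a single disk $\Gamma_x^u$. Instead it builds a full-measure set $A^\infty\subset A$ that is \emph{dense} in a chart around any $p\in A^\infty$, by iteratively saturating along the essential unstable and stable manifolds of span $\delta$ (the sets $X_0\subset X_1\subset\cdots$ become $2/10^n$-dense). Density is exactly what defeats the scale problem: given any $q$ in the ball carrying a competing ergodic component $B$, with its possibly tiny Pesin neighbourhood $V_q=\bigcup_{x\in\Wlu(q)}\Wls(x)$, one can pick $y\in A^\infty$ as close to $q$ as needed, and then the \emph{uniform-size} disk $\Gamma_y^u$ crosses $V_q$ in positive $m_{\Gamma_y^u}$-measure by absolute continuity. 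In other words, the paper lets the large manifolds of $A$ come to the small manifolds of $A^c$, rather than asking the small manifolds of $A^c$ to reach a fixed large manifold of $A$; the zigzag density argument is what makes that possible and is the idea your proposal is missing.
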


For the proof of Lemma \ref{one large component implies local ergodicity}, it is convenient to represent the ergodic component $m_A$ by a set with some specially nice properties. Let $A_0$ be the set of points which are forward and backward generic for $m_A$, intersected with $G^{\infty}$, so that each $x \in A^0$ admits essential stable and unstable manifolds $\Gamma_x^s, \ \Gamma_x^u$ of span $\delta$, say, where $\delta>0$ is some fixed constant. We then define $A^1 = \{ x \in A^0: m_{\Gamma_x^s}(A^0) = \vert m_{\Gamma_x^s}\vert \}$, $A^2 = \{ x \in A^1: m_{\Gamma_x^u}(A^1) = \vert m_{\Gamma_x^u}\vert \}$, $A^3 = \{ x \in A^2: m_{\Gamma_x^s}(A^2) = \vert m_{\Gamma_x^s}\vert \}$ etc. Then, by Corollary \ref{full measure in component}, $A^{\infty} = \cap_{n \geq 0} A^n$ is a Borel set with $m(A^{\infty} \bigtriangleup A) =0$. It has the convenient property that if we pick any $x_0 \in A^{\infty}$, then $m_{\Gamma_{x_0}^u}$-almost every point in $\Gamma_{x_0}^u$ lies in $A^{\infty}$. Hence is generic for $m_A$ and has an essential stable and unstable manifold of span $\delta$. And if we pick any $x_1 \in \Gamma_{x_0}^u \cap A^{\infty}$, then $m_{\Gamma_{x_1}^s}$-almost every point in $\Gamma_{x_1}^s$ lies in $A^{\infty}$. Hence is generic for $m_A$ and has an essential stable and unstable manifold of span $\delta$. So it goes on eternally.

\begin{proof}[Proof of Lemma \ref{one large component implies local ergodicity}]
We shall show that there is $\rho>0$ such that $A^{\infty}$ has full Lebesgue measure in $B_{\rho}(p)$, for every $p \in A^{\infty}$. Fix  $p\in A^{\infty}$ arbitrarily. We choose $\rho>0$ sufficiently small so that there is a chart $\varphi : (-2,2)^d \subset \mathbb{R}^n \rightarrow M$, with $\varphi(0) = p$, such that the image of $(-1,1)^{d}$ contains $B_{\rho}(p)$, and such that the restriction of the subbundles $E^{cs}, \ E^{cu}$ to the image of $\varphi$ correspond to planes nearly parallel to the `horizontal' $\mathbb{R}^{cs} \times \{0\}$ and the `vertical' $\{0\} \times \mathbb{R}^{cu}$, meaning that they are graphs of linear maps
\begin{equation*}
 A^{cs}: \mathbb{R}^{cs} \rightarrow \mathbb{R}^{cu}, \quad A^{cu}: \mathbb{R}^{cu} \rightarrow \mathbb{R}^{cs}, 
\end{equation*}
where $A^{cs}$ and $A^{cu}$ have (Euclidian) norm less than, say, $\frac{1}{10}$. We also suppose that $\varphi((-2,2)^d)$ is small enough so that if $x \in (-1,1)^d$, then $\varphi^{-1} ( \Gamma_{\varphi(x)}^u)$ is the graph of a $C^1$ map $\psi_x: (-2,2)^{cu} \rightarrow (-2,2)^{cs}$. The same thing is required for essential stable manifolds. By compactness of $M$ and continuity of $E^{cs},\ E^{cu}$, such $\rho$ can be taken uniform, i.e. independent of $p \in M$. Altough we work in a chart, we shall use the same notation for subsets of $M$ and their counterparts in $\mathbb{R}^d$. Thus we write $A^{\infty}, \Gamma_x^s \subset \mathbb{R}^d$ instead of $\varphi^{-1} (A^{\infty}), \varphi^{-1}(\Gamma_x^s)$ etc.

We argue by contradiction. Suppose that $A^{\infty}$ does not have full measure in $(-1,1)^{cs}\times (-1,1)^{cu}$. Then, by Corollary \ref{full measure in component}, there is some $q$ in $(-1,1)^{cs} \times (-1,1)^{cu}$ such that $m_{\Wlu(q)}(B \cap H) = \vert m_{\Wlu(q)} \vert$, where $B$ represents an ergodic component different from $m_A$. Let 
\begin{equation*}
V_q = \bigcup_{x\in \Wlu(q)} \Wls(x) 
\end{equation*}
To arrive at a contradiction, it is enough to show that there exists $y \in (-1,1)^d\cap A^{\infty}$ close enough to $q$ that $m_{\Gamma_y^u}(C_p)>0$. Hence the proof is complete once we show that $A^{\infty}$ is dense in $(-1,1)^d$.

We define a sequence of sets $\{X_n : n\geq 0 \}$ recursively. Let $X_0 = \Gamma_p^u \cap A^{\infty}$. If $n\geq 1$ is odd, let $X_n = A^{\infty} \cap \bigcup_{x\in X_{n-1}} \Gamma_x^s$. If $n\geq 2$ is even, let $X_n = A^{\infty} \cap \bigcup_{x\in X_{n-1}} \Gamma_x^u$. By construction, $X_1$ is $\frac{2}{10}$-dense in $(-1,1)^d$, meaning that 
\begin{equation*}
 \sup\{r>0: \exists x\in (-1,1)^d\text{ s.t. } B_r(x) \cap X_1 = \emptyset\} \leq \frac{2}{10}.
\end{equation*}
Similarly, $X_2$ is $\frac{2}{10^2}$-dense in $(-1,1)^d$ and, generally, $X_n$ is $\frac{2}{10^n}$-dense in $(-1,1)^d$. The proof is thus complete.

\end{proof}

We are now ready to proceed with the proof of the first auxiliary result.

\begin{proof}[Proof of Proposition \ref{bi-inflatable implies ergodicity}]

Take $\rho>0$ as in Lemma \ref{one large component implies local ergodicity}. We cover $M$ by a finite number of balls of radius $\rho/2$:

\[B_i = B_{\rho/2}(x_i); \quad i=1, \ldots, N.\]

We may suppose, without loss of generality, that $m(B_1 \cap A)>0$. Now, by connectedness of $M$, given any $k \in \{1, \ldots, N\}$, there exists a sequence $1=i_1, \ldots, i_{\ell} =k$ such that $B_{i_n} \cap B_{i_{n+1}} \neq \emptyset $ for $n = 1, \ldots, \ell-1$.

By applying Lemma \ref{one large component implies local ergodicity} to a typical point in $A \cap B_1$ we see that $m(B_{i_1} \cap A^c)=0$. Since $m(B_{i_1} \cap B_{i_2}) >0$ we can apply Lemma \ref{one large component implies local ergodicity} again to see that $m(B_{i_1} \cap A^c)= 0$. By repeating the argument we eventually conclude that $m(B_k \cap A^c) = 0$. Since $k \in \{1, \ldots, N\}$ was chosen arbitrarily, this means that $A$ has full Lebesgue measure in each $B_i$, hence it has full Lebesgue measure in $M$.
\end{proof}

The proof of the second auxiliary result relies on the following well known fact (see e.g. \cite{MR1944399}).

\begin{prop} \label{semicont}
 The integrated Lyapunov exponents
̈́\begin{align}
  \mathcal{DS}^1 & \rightarrow \mathbb{R} \\
f  \mapsto \int \lambda_-^{cu}(f, x) dm(x), & \quad 
f  \mapsto \int \lambda_+^{cs}(f, x) dm(x)
 \end{align}
are lower and upper semi-continuous, respectively.
\end{prop}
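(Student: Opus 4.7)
The plan is to exploit the subadditive cocycle structure of the iterated derivatives on the invariant subbundles. For $f \in \mathcal{DS}^1$ and $n \geq 1$, define
\[ a_n(f,x) := \log \| Df_{\vert E_x^{cs}(f)}^n \|, \qquad b_n(f,x) := \log \|(Df_{\vert E_x^{cu}(f)}^n)^{-1}\|. \]
The chain rule gives $a_{n+m}(f,x) \leq a_n(f, f^m(x)) + a_m(f,x)$, and analogously for $b_n$, so both are subadditive cocycles over the conservative system $(f,m)$. By Kingman's subadditive ergodic theorem, the almost-sure limits $\lambda_+^{cs}(f,x) = \lim_n \tfrac{1}{n} a_n(f,x)$ and $\lambda_-^{cu}(f,x) = -\lim_n \tfrac{1}{n} b_n(f,x)$ exist $m$-a.e., and the integrals can be written as
\[ \int \lambda_+^{cs}(f,x)\, dm(x) \;=\; \inf_{n\geq 1} \frac{1}{n} \int a_n(f,x)\, dm(x), \]
\[ \int \lambda_-^{cu}(f,x)\, dm(x) \;=\; \sup_{n\geq 1} \left( -\frac{1}{n} \int b_n(f,x)\, dm(x) \right). \]

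The second step is to verify that, for each fixed $n\geq 1$, the maps $f \mapsto \int a_n(f,\cdot)\, dm$ and $f \mapsto \int b_n(f,\cdot)\, dm$ are continuous on $\mathcal{DS}^1$ in the $C^1$ topology. Two ingredients enter: first, the $C^1$-continuous dependence of the dominated splitting $TM = E^{cs}(f) \oplus E^{cu}(f)$ on $f$ (in the corresponding Grassmannian bundles), already recorded in Section \ref{statements}; second, the $C^0$-continuity of $f \mapsto Df^n$ when $f$ varies in the $C^1$ topology, which is immediate from the chain rule. Combined with compactness of $M$, these give joint continuity of $(f,x)\mapsto a_n(f,x)$ (and $b_n$) together with a uniform bound on $M$ for $f$ in any small $C^1$-neighbourhood, so the dominated convergence theorem yields continuity of the integrals.

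The final step is a purely formal observation: an infimum of continuous functions is upper semi-continuous, and a supremum of continuous functions is lower semi-continuous. This yields the upper semi-continuity of $f \mapsto \int \lambda_+^{cs}(f,x)\,dm$ and the lower semi-continuity of $f \mapsto \int \lambda_-^{cu}(f,x)\,dm$, as claimed.

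I do not anticipate any serious obstacle; the only point deserving care is the continuous dependence of the invariant subbundles on $f$, but this is precisely the $C^1$-robustness of dominated splittings already used throughout the paper.
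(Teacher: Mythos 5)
Your proof is correct. The paper itself gives no argument for this proposition --- it is stated as a well-known fact with a citation --- and what you wrote is precisely the standard proof: Kingman's theorem identifies each integrated exponent as an infimum (respectively, a supremum) over $n$ of the finite-time averages $\frac{1}{n}\int a_n\,dm$ and $-\frac{1}{n}\int b_n\,dm$, each of which is continuous on $\mathcal{DS}$ in the $C^1$ topology by the robust, continuous dependence of the dominated splitting on $f$; semi-continuity then follows formally. The only point worth being explicit about, which you correctly flag, is that continuity of $f\mapsto E^{\sigma}(f)$ requires fixing $\dim E^{cs}$, as the paper does throughout.
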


Recall the notation 
\begin{align}
\Xi_n^{cu} (f) & = \log \left( \sup_{x \in M} \| \wedge^{cu-1}Df_{\vert E_x^{cu}}^n \| \right) \\
\xi_n^{cu}(f, x) & = \log \vert \det Df_{\vert E_x^{cu} }^n \vert
\end{align}
from section \ref{small sight}. We define $\Xi_n^{cs}(f)$ and $\xi_n^{cs}$ similarly.

\begin{proof}[Proof of Propositon \ref{EBINA is open}]
 
Suppose $f \in \NA$ is ergodic and bi-inflatable. We must show that each $g \in \DS$, sufficiently close to $f$ in the $C^1$ topology, is also non-uniformly Anosov and ergodic. 

From Proposition \ref{semicont}, there is a $C^1$-neighbourhood $\mathcal{U}$ of $f$ such that for every $g\in \mathcal{U}$, we have 
\begin{equation}
 m(\{x : \lambda_+^{cs}(g,x) <0 \} \cap \{ x: \lambda_-^{cu}(g,x)>0\}) > 1/2.
\end{equation}
Furthermore, since $\frac{1}{n} \xi_n^{cu} (f,x)$, $\frac{1}{n} \xi_n^{cs} (f,x)$ are uniformly bounded sequences with 
\begin{align}
 \lim_{n\rightarrow \infty} \frac{1}{n} \xi_n^{cu}(f,x)  & = \int \xi_0^{cu} dm > \inf_{n\geq 0} \frac{1}{n} \Xi_n^{cu}(f) \\
\lim_{n\rightarrow \infty} \frac{1}{n} \xi_n^{cs}(f,x)  & = \int \xi_0^{cs} dm > \inf_{n\geq 0} \frac{1}{n} \Xi_n^{cs}(f),
\end{align}
there is some $N \geq 0$ such that, upon possibly reducing $\mathcal{U}$, we may suppose that for every $g \in \mathcal{U}$, we have
\begin{align}
& \inf_{m(A) > \frac{1}{2}} \frac{1}{m(A)} \int_A \xi_N^{cu}(g,x) dm(x) > \Xi_N^{cu} (g) \\
& \inf_{m(A) > \frac{1}{2}} \frac{1}{m(A)} \int_A \xi_N^{cs}(g,x) dm(x) > \Xi_N^{cs} (g).
\end{align}
It follows that every $g \in \mathcal{U}$ has at least one ergodic component $A$ which is bi-inflatable and on such that $\lambda_+^{cs}(g,x) <0$ and $\lambda_-^{cu}(g,x)>0$ for $m$-almost every $x \in A$. Hence it follows from Theorem \ref{main} and Proposition \ref{bi-inflatable implies ergodicity} that  $m(A) = 1$, proving that $f \in \mathcal{E} \cap \BI \cap  \NA$. 
\end{proof}

\bibliography{ergodicity}{}

\begin{thebibliography}{10}

\bibitem{Andersson}
Martin Andersson.
\newblock Robust ergodic properties in partially hyperbolic dynamics.
\newblock {\em To appear in Trans. of the AMS. Preprint at \texttt{
  arXiv:0810.2048v1 }}.

\bibitem{MR0163258}
D.~V. Anosov.
\newblock Ergodic properties of geodesic flows on closed {R}iemannian manifolds
  of negative curvature.
\newblock {\em Dokl. Akad. Nauk SSSR}, 151:1250--1252, 1963.

\bibitem{MR1944399}
Jairo Bochi.
\newblock Genericity of zero {L}yapunov exponents.
\newblock {\em Ergodic Theory Dynam. Systems}, 22(6):1667--1696, 2002.

\bibitem{MR2227756}
Jairo Bochi, Bassam~R. Fayad, and Enrique Pujals.
\newblock A remark on conservative diffeomorphisms.
\newblock {\em C. R. Math. Acad. Sci. Paris}, 342(10):763--766, 2006.

\bibitem{MR2410949}
Keith Burns and Amie Wilkinson.
\newblock Dynamical coherence and center bunching.
\newblock {\em Discrete Contin. Dyn. Syst.}, 22(1-2):89--100, 2008.

\bibitem{DHP}
Dmitry Dolgopyat, Hyui Hu, and Yakov Pesin.
\newblock An exapmle of a smooth hyperbolic measure with countably many ergodic
  components.
\newblock {\em Appendix to "Lectures on Lyapunov Exponents and Smooth Ergodic
  Theory" in the book: "Smooth Ergodic Theory and Its Applications",
  Proceedings of Symposia in Pure Mathematics}, pages 95--106, 2001.

\bibitem{MR0501173}
M.~W. Hirsch, C.~C. Pugh, and M.~Shub.
\newblock {\em Invariant manifolds}.
\newblock Lecture Notes in Mathematics, Vol. 583. Springer-Verlag, Berlin,
  1977.

\bibitem{MR0001464}
Eberhard Hopf.
\newblock Statistik der geod\"atischen {L}inien in {M}annigfaltigkeiten
  negativer {K}r\"ummung.
\newblock {\em Ber. Verh. S\"achs. Akad. Wiss. Leipzig}, 91:261--304, 1939.

\bibitem{MR0458490}
Ja.~B. Pesin.
\newblock Families of invariant manifolds that correspond to nonzero
  characteristic exponents.
\newblock {\em Izv. Akad. Nauk SSSR Ser. Mat.}, 40(6):1332--1379, 1440, 1976.

\bibitem{MR0466791}
Ja.~B. Pesin.
\newblock Characteristic {L}japunov exponents, and smooth ergodic theory.
\newblock {\em Uspehi Mat. Nauk}, 32(4 (196)):55--112, 287, 1977.

\bibitem{MR1449765}
Charles Pugh and Michael Shub.
\newblock Stably ergodic dynamical systems and partial hyperbolicity.
\newblock {\em J. Complexity}, 13(1):125--179, 1997.

\bibitem{MR1432307}
Charles Pugh, Michael Shub, and Amie Wilkinson.
\newblock H\"older foliations.
\newblock {\em Duke Math. J.}, 86(3):517--546, 1997.

\bibitem{MR2390288}
F.~Rodriguez~Hertz, M.~A. Rodriguez~Hertz, and R.~Ures.
\newblock Accessibility and stable ergodicity for partially hyperbolic
  diffeomorphisms with 1{D}-center bundle.
\newblock {\em Invent. Math.}, 172(2):353--381, 2008.

\bibitem{MR2085722}
Ali Tahzibi.
\newblock Stably ergodic diffeomorphisms which are not partially hyperbolic.
\newblock {\em Israel J. Math.}, 142:315--344, 2004.

\bibitem{T}
Ali Tahzibi.
\newblock Ergodicity of differentiable dynamics.
\newblock {\em Proceeding of the {\engordnumber{8}} seminar on differential
  equations, dynamical systems and applications, Isfahan, Iran}, 2008.

\end{thebibliography}
\bibliographystyle{plain}

\end{document}